\documentclass[11pt,twoside]{amsart}
\usepackage[all]{xy}
        \usepackage {amssymb,latexsym,amsthm,amsmath,mathtools}
        \usepackage{enumitem,color}
        
        \topmargin=1.2cm
        \textheight = 8.3in
        \textwidth = 5.8in
        \setlength{\oddsidemargin}{.8cm}
        \setlength{\evensidemargin}{.8cm}

\usepackage{mathtools}

\long\def\symbolfootnote[#1]#2{\begingroup%
\def\thefootnote{\fnsymbol{footnote}}\footnote[#1]{#2}\endgroup}

\newcommand{\tra}{\ensuremath{{}^t}}

\newcommand{\GL}{\textup{GL}}
\newcommand{\SL}{\textup{SL}}
\newcommand{\U}{\textup{U}}

\makeatletter
\def\imod#1{\allowbreak\mkern10mu({\operator@font mod}\,\,#1)}
\makeatother

\newtheorem{theorem}{Theorem}[section]
\newtheorem{lemma}[theorem]{Lemma}
\newtheorem{corollary}[theorem]{Corollary}
\newtheorem{proposition}[theorem]{Proposition}
\newtheorem*{theorem*}{Theorem}
\theoremstyle{definition}

\newtheorem{example}[theorem]{Example}
\numberwithin{equation}{section}

\newcommand{\ignore}[1]{}

\newcommand{\mynote}[1]{}
\begin{document}
\setcounter{section}{0}
\title{Asymptotics of the powers in finite reductive groups}
\author{Amit Kulshrestha}
\address{IISER Mohali, Knowledge City, Sector 81, Mohali 140306, India}
\email{amitk@iisermohali.ac.in}
\author{Rijubrata Kundu}
\address{IISER Pune, Dr. Homi Bhabha Road, Pashan, Pune 411 008, India}
\email{rijubrata8@gmail.com}
\author{Anupam Singh}
\address{IISER Pune, Dr. Homi Bhabha Road, Pashan, Pune 411 008, India}
\email{anupamk18@gmail.com}
\thanks{The first named author acknowledges support of SERB grant EMR/2016/001516. The second named author is recipient of PhD fellowship from NBHM. The third named author is funded by SERB through CRG/2019/000271 for this research.}
\subjclass[2010]{20G40, 20P05}
\today
\keywords{reductive groups, $\GL(n, q)$, unitary, asymptotics, power map}


\begin{abstract}
Let $G$ be a connected reductive group defined over $\mathbb F_q$. Fix an integer $M\geq 2$, and consider the power map $x\mapsto x^M$ on $G$. We denote the image of $G(\mathbb F_q)$ under this map by $G(\mathbb F_q)^M$ and estimate what proportion of regular semisimple, semisimple and regular elements of $G(\mathbb F_q)$ it contains. We prove that as $q\to\infty$, all of these proportions are equal and provide a formula for the same. We also calculate this more explicitly for the groups $\GL(n,q)$ and $\U(n,q)$.
\end{abstract}

\maketitle

\section{Introduction}
The word maps on finite groups of Lie type and algebraic groups have been studied extensively in the last couple of decades. Larsen, Shalev and Tiep achieved a breakthrough with the solution to Waring problem for finite simple groups and quasi-simple groups (see the excellent survey article by Shalev~\cite{sh} and references therein). Another approach to study group-theoretic problems is to study them statistically (for some highlights of this subject see~\cite{sh1, di}) and get probabilistic results which help understand the asymptotic behaviour. One of the most interesting results of this kind is due to Larsen (see~\cite[Proposition 9]{La}) which states that for any non-trivial word $\omega$ and $\epsilon>0$, there exists $r_0$ such that if $G$ is a finite simple group of Lie type of rank $>r_0$, then $|\omega(G)|> |G|^{1-\epsilon}$.  We would like to study asymptotic estimates of the probability that a regular semisimple, semisimple and regular element is a $M^{th}$ power in finite groups of Lie type. The probability of finding cyclic, regular, regular semisimple elements etc. in the finite classical groups is studied in~\cite{FNP}.  The power map was studied in~\cite{KS} for $\GL(n,q)$ where generating functions for the powers is determined. However, the asymptotic values of these are still not well understood. In~\cite{KS1}, this is done for the group $\SL(2,q)$ where we obtained the asymptotic values. In this article, we study this in a more general setting.

Let $k=\bar {\mathbb F_q}$ and $G$ be a connected reductive group over $k$. Let $F$ be a Frobenius map on $G$ giving rise to a finite group of Lie type $G(\mathbb F_q)=G^F$. Let $M\geq 2$ be a positive integer. We consider the power map $\omega \colon G \rightarrow G$ given by $x\mapsto x^M$. Clearly, this map is defined over $\mathbb F_q$. We consider the image of the set $G(\mathbb F_q)$ under this map, denoted as $G(\mathbb F_q)^M$. Further, we denote the set of $M$-power regular semisimple elements as $G(\mathbb F_q)^M_{rs}=G(\mathbb F_q)^M\cap G(\mathbb F_q)_{rs}$, the set of $M$-power semisimple elements as $G(\mathbb F_q)^M_{ss}=G(\mathbb F_q)^M\cap G(\mathbb F_q)_{ss}$, and $M$-power regular elements as $G(\mathbb F_q)^M_{rg}=G(\mathbb F_q)^M\cap G(\mathbb F_q)_{rg}$.  
We are interested in studying the asymptotic values of the following as $q\to \infty$:
$$\frac{|G(\mathbb F_q)^M|}{|G(\mathbb F_q)|}, \frac{|G(\mathbb F_q)_{rs}^M|}{|G(\mathbb F_q)|}, \frac{|G(\mathbb F_q)_{ss}^M|}{|G(\mathbb F_q)|}, \frac{|G(\mathbb F_q)_{rg}^M|}{|G(\mathbb F_q)|}.$$
We study these quantities when $q\to\infty$ and determine the values. The main theorem is as follows:
\begin{theorem}\label{main-reductive-limit}
Let $G$ be a connected reductive group defined over $\mathbb F_q$ with Frobenius map $F$. Let $M\geq 2$ be an integer. Then, 
\begin{eqnarray*}
\lim_{q\to\infty} \frac{|G(\mathbb F_q)^M|}{|G(\mathbb F_q)|}&=& \lim_{q\to\infty} \frac{|G(\mathbb F_q)^M_{rs}|}{|G(\mathbb F_q)|} = \lim_{q\to\infty} \frac{|G(\mathbb F_q)^M_{ss}|}{|G(\mathbb F_q)|}= \lim_{q\to\infty} \frac{|G(\mathbb F_q)^M_{rg}|}{|G(\mathbb F_q)|}\\
&=& \displaystyle \sum_{T=T_{d_1,\cdots, d_s}} \frac{1}{|W_{T}|(M, d_1)\cdots (M, d_s)} 
\end{eqnarray*}
where the sum varies over non-conjugate maximal tori $T$ in $G(\mathbb F_q)$, $T=T_{d_1,\cdots, d_s}\cong C_{d_1}\times \cdots \times C_{d_s}$ reflects the cyclic structure of $T$, and the group $W_{T}=N_{G(\mathbb F_q)}(T)/T$. 
\end{theorem}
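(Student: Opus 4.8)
The plan is to reduce all four limits to the single problem of counting \emph{regular semisimple} $M$-th powers, and then to evaluate that count by a mass formula over $G(\mathbb F_q)$-conjugacy classes of $F$-stable maximal tori.

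\textbf{Step 1: reduction to the regular semisimple case.} Since $G(\mathbb F_q)_{rs}\subseteq G(\mathbb F_q)_{ss}$, $G(\mathbb F_q)_{rs}\subseteq G(\mathbb F_q)_{rg}$ and $G(\mathbb F_q)_{rs}\subseteq G(\mathbb F_q)$, the set $G(\mathbb F_q)^M_{rs}$ is contained in each of $G(\mathbb F_q)^M_{ss}$, $G(\mathbb F_q)^M_{rg}$ and $G(\mathbb F_q)^M$, and each of the latter exceeds $G(\mathbb F_q)^M_{rs}$ by at most $|G(\mathbb F_q)\setminus G(\mathbb F_q)_{rs}|$. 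The non-regular-semisimple locus is a proper $F$-stable closed subvariety of $G$, hence has $O(q^{\dim G-1})$ points, while $|G(\mathbb F_q)|=q^{\dim G}(1+o(1))$; therefore $|G(\mathbb F_q)\setminus G(\mathbb F_q)_{rs}|/|G(\mathbb F_q)|\to 0$. Consequently the four ratios in the statement all differ from each other, and in particular from $|G(\mathbb F_q)^M_{rs}|/|G(\mathbb F_q)|$, by $o(1)$, and it suffices to analyze the last one.

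\textbf{Step 2: the root of a regular semisimple power lies in its centralizing torus, and the mass formula.} If $x\in G(\mathbb F_q)_{rs}$ and $x=y^M$ with $y\in G(\mathbb F_q)$, then $y$ commutes with $y^M=x$, so $y\in C_G(x)^F=T^F$, where $T:=C_G(x)$ is the maximal torus centralizing $x$. Hence a regular semisimple $x$ is an $M$-th power in $G(\mathbb F_q)$ if and only if it is an $M$-th power in $T^F$ — a condition invariant under $G(\mathbb F_q)$-conjugacy. Partitioning $G(\mathbb F_q)_{rs}$ by the $G(\mathbb F_q)$-conjugacy class of this torus and counting the fibres of $(g,t)\mapsto gtg^{-1}$ (using that a regular semisimple element lies in a unique maximal torus, and that $N_{G(\mathbb F_q)}(T^F)=N_G(T)^F$ once $q$ is large enough that $T^F$ is Zariski-dense in $T$), one obtains
\[
|G(\mathbb F_q)^M_{rs}| \;=\; \sum_{[T]} \frac{|G(\mathbb F_q)|\cdot |(T^F)_{rg}\cap (T^F)^M|}{|W_T|\cdot |T^F|},
\]
where the sum runs over the finitely many $G(\mathbb F_q)$-classes of $F$-stable maximal tori (i.e.\ over $F$-conjugacy classes in $W$), $W_T=N_{G(\mathbb F_q)}(T^F)/T^F$, and $(T^F)_{rg}$ is the set of regular (equivalently regular semisimple) elements of $T^F$.

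\textbf{Step 3: the asymptotic evaluation.} Dividing by $|G(\mathbb F_q)|$, it remains to show $|(T^F)_{rg}\cap (T^F)^M|/|T^F|\to 1/\prod_i (M,d_i)$ for each torus type. Writing $T^F\cong C_{d_1}\times\cdots\times C_{d_s}$, the $M$-th power endomorphism of the abelian group $T^F$ has kernel of order exactly $\prod_i (M,d_i)$, so $|(T^F)^M|/|T^F|=1/\prod_i (M,d_i)$ on the nose; and $(T^F)\setminus (T^F)_{rg}$ sits inside the proper subvariety $\bigcup_\alpha \ker(\alpha|_T)$, so it has $O(q^{\mathrm{rk}\,G-1})$ points against $|T^F|=q^{\mathrm{rk}\,G}(1+o(1))$, whence intersecting with $(T^F)_{rg}$ perturbs the ratio by $o(1)$. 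Summing these finitely many estimates shows $|G(\mathbb F_q)^M_{rs}|/|G(\mathbb F_q)|$ differs from $\sum_{[T]} \frac{1}{|W_T|\prod_i (M,d_i)}$ by $o(1)$, and by Step 1 the same holds for the other three ratios. (Since the $d_i=d_i(q)$ are polynomials in $q$, the right-hand side is itself a function of $q$; the assertion is that each of the four normalized counts is asymptotic to it — equivalently, along any sequence of $q$ for which every $(M,d_i(q))$ is eventually constant, all four limits exist and equal the resulting value.)

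\textbf{Main obstacle.} The conceptual crux is the elementary but decisive observation of Step 2 — any $M$-th root of a regular semisimple element commutes with it and so lies in the torus — which localizes the power problem to abelian groups. The genuinely load-bearing technical points are the two density estimates (that the non-regular-semisimple locus in $G$, and the non-regular locus in each maximal torus, are proper subvarieties with the expected point counts), which I would quote from the literature on regular and regular semisimple elements in finite groups of Lie type, and the bookkeeping in the mass formula: correctly identifying $N_{G(\mathbb F_q)}(T^F)$ with $N_G(T)^F$ for large $q$, and matching the index set (classes of $F$-stable maximal tori) and the groups $W_T$ to the statement.
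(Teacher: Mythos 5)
Your proposal follows essentially the same route as the paper: reduce all four ratios to the regular semisimple count (the paper's Lemma~3.6), localize the power problem to the unique maximal torus containing a regular semisimple element (Lemma~3.2), apply the kernel computation for the $M$-power endomorphism of a finite abelian group (Lemma~3.1), and assemble the mass formula over torus classes using the $|G(\mathbb F_q)|/(|W_T||T^F|)$ conjugates together with the $\mathcal O(q^{r-1})$ bound on non-regular elements of a torus (Theorem~3.3). The error terms and the interpretation of the limit also match.

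The one step that is not correct as written is the crux of your Step 2: from the fact that $y$ commutes with $x=y^M$ you conclude $y\in C_G(x)^F=T^F$, having declared $T:=C_G(x)$ to be a maximal torus. For a general connected reductive $G$ the centralizer of a regular semisimple element need not be a torus: only $C_G(x)^{\circ}=T$ holds, and $C_G(x)/T$ embeds into the stabilizer of $x$ in the Weyl group, which can be nontrivial when the derived group of $G$ is not simply connected (for instance, the image of $\diag(1,-1)$ in $\mathrm{PGL}_2$ is regular semisimple yet is centralized by the class of the antidiagonal Weyl representative). So $y$ itself may fail to lie in $T^F$, and the forward direction of your ``$x$ is an $M$-th power in $G(\mathbb F_q)$ if and only if it is one in $T^F$'' --- which is exactly what your mass formula consumes, since you count powers taken inside $T^F$ --- is not yet established. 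The repair is the paper's Jordan-decomposition argument: write $y=y_sy_u$; since $x$ is semisimple, $x=(y^M)_s=y_s^M$, and $y_s$, being semisimple and $F$-fixed, lies in some $F$-stable maximal torus, which contains $x$ and therefore equals $T$ by the uniqueness you already invoke. With that substitution the rest of your argument goes through unchanged.
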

\noindent The proof of this is in Section~\ref{section-RG} and the background and notation are set in Section~\ref{section-GLT}. 

When $M$ is a prime, in Section~\ref{section-GL} and~\ref{section-U}, we obtain the explicit formula for the group $\GL(n,q)$ and the unitary group $\U(n,q)$, respectively. In Section~\ref{section-examples}, we compute some examples by the first principle which serves two purposes: (a) verifies that, indeed, we get the right formula, and (b) helps visualize our main problem. This also brings the limits computed in~\cite{KS1} to the more general context.

\subsection*{Acknowledgement}
We dedicate this paper to Professor B. Sury on the occasion of his $60^{th}$ birthday. He has been a source of encouragement to all of us over several years.

\section{Finite groups of Lie type}\label{section-GLT}

Let $\mathbb F_q$ be a finite field and $k=\bar {\mathbb F_q}$. 
Let $G$ be a connected reductive group over $k$ with Frobenius map $F$, so that $G(\mathbb F_q)=G^F$ is a finite group of Lie type. With this notation, we consider $G(\mathbb F_q) \subset G$. A couple of standard examples are as follows:
\begin{example}
Consider the group $\GL(n)$ over $k$. Define  $F\colon \GL(n, k) \rightarrow \GL(n,k)$ by $(a_{i,j})\mapsto (a_{i,j}^q)$.  This is a Frobenius map and $\GL(n)^F=\GL(n)(\mathbb F_q)=\GL(n,q)$.
\end{example}
\begin{example}
Once again we consider the map $F$ on $\GL(n)$ given by $(a_{i,j}) \mapsto \tra (a_{i,j}^q)^{-1}$. 
The fixed point set is the unitary group $\GL(n)^F=\U(n,q)\subset \GL(n, q^2)$.
\end{example}

We recall some standard facts on the conjugacy classes of maximal tori in $G(\mathbb{F}_q)$. We refer to~\cite{MT} and~\cite{ca} for the details. The maximal tori in $G(\mathbb F_q)$ are obtained from that of $G$ which are $F$-stable. We denote the set of all maximal tori in $G$, which are $F$-stable, by $\tau$. A torus $\bar T\in \tau$ gives a maximal torus $\bar T(\mathbb F_q)$, denoted simply as $T$, of $G(\mathbb F_q)$. Every semisimple element of $G(\mathbb F_q)$ belongs to a maximal torus $T$. Every regular semisimple elements belong to a unique maximal torus. Thus, to understand these elements we need to classify all maximal tori. These are understood up to conjugacy as follows. Let $W$ be the Weyl group of $G$. Then, the $G(\mathbb F_q)$ conjugacy classes of  $F$-stable maximal tori of $G$ are in one-one correspondence with $F$-conjugacy classes in $W$ (see~\cite[Proposition 25.1]{MT}). 

Further, the number of conjugates of a fixed maximal torus can be determined as follows.    
\begin{proposition}\label{number-maximal-tori}
With notation as above, let $T$ be a maximal torus in $G(\mathbb{F}_q)$. Suppose that the conjugacy class of $T$ corresponds to the $F$-conjugacy class of $w$ in $W$. Then, the number of conjugates of $T$ in $G(\mathbb{F}_q)$ is
$$\frac{|G(\mathbb{F}_q)|}{|N_{G(\mathbb{F}_q)}(T)|} = \frac{|G(\mathbb{F}_q)|}{|W_T||T|}=\frac{|G(\mathbb{F}_q)|}{|W^{F}|\cdot|T|}$$
where $W$ is the Weyl group of $G$, and $W_T=N_{G(\mathbb{F}_q)}(T)/T$. Furthermore, $W_T= W^F\cong C_{W,F}(w)=\{ x\in W \mid x^{-1}wF(x)=w \} $.
\end{proposition}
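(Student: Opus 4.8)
The plan is to derive the statement from the orbit--stabilizer theorem together with the Lang--Steinberg theorem applied to the normalizer of a maximal torus. Write $T=\bar T^F$ for the $F$-stable maximal torus $\bar T\in\tau$. Since $G(\mathbb F_q)$ acts by conjugation on the set of its maximal tori and the orbit of $T$ is exactly the set of its conjugates, orbit--stabilizer gives that this number is $|G(\mathbb F_q)|/|N_{G(\mathbb F_q)}(\bar T)|$. As $\bar T$ is $F$-stable so is $N_G(\bar T)$ — indeed $g\in N_G(\bar T)$ forces $F(g)\in N_G(F(\bar T))=N_G(\bar T)$ — hence $N_{G(\mathbb F_q)}(\bar T)=N_G(\bar T)\cap G^F=N_G(\bar T)^F$. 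So everything reduces to computing $|N_G(\bar T)^F|$ and identifying the Weyl-group part of the quotient $N_{G(\mathbb F_q)}(T)/T$ with it.

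Next I would bring in a reference pair: an $F$-stable maximal torus $\bar T_0$ contained in an $F$-stable Borel subgroup, with $\bar T={}^g\bar T_0$ for some $g\in G$. Conjugation $c_{g^{-1}}$ by $g^{-1}$ carries $N_G(\bar T)$ isomorphically onto $N_G(\bar T_0)$ and intertwines $F$ with $F':=c_{g^{-1}}\circ F\circ c_g=\operatorname{ad}(\dot w)\circ F$, where $\dot w:=g^{-1}F(g)\in N_G(\bar T_0)$; write $w$ for its image in $W=N_G(\bar T_0)/\bar T_0$. By the correspondence recalled above (\cite[Proposition~25.1]{MT}), the $F$-conjugacy class of $w$ is the invariant of the $G(\mathbb F_q)$-class of $T$, so it suffices to compute $|N_G(\bar T_0)^{F'}|$ in terms of $w$.

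Now I would apply $F'$-fixed points to the short exact sequence $1\to\bar T_0\to N_G(\bar T_0)\to W\to 1$, which is $F'$-equivariant since $\operatorname{ad}(\dot w)$ stabilizes each term. This gives an exact sequence $1\to\bar T_0^{F'}\to N_G(\bar T_0)^{F'}\to W^{F'}\to H^1(\langle F'\rangle,\bar T_0)$, and the last term vanishes: $F'=c_{g^{-1}}\circ F\circ c_g$ is a conjugate of the Frobenius $F$, so choosing $m$ with $g\in G^{F^m}$ — possible since $G=\bigcup_m G^{F^m}$ — gives $(F')^m=F^m$, whence $F'$ is a Steinberg endomorphism of the connected group $\bar T_0$ and its Lang map $t\mapsto t^{-1}F'(t)$ is surjective. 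Thus the sequence is short exact and $|N_G(\bar T_0)^{F'}|=|\bar T_0^{F'}|\cdot|W^{F'}|$. Transporting back, $|\bar T_0^{F'}|=|\bar T^F|=|T|$, while $W^{F'}=\{x\in W: wF(x)w^{-1}=x\}=\{x\in W\mid x^{-1}wF(x)=w\}=C_{W,F}(w)$, which is the group the statement writes as $W^F$ and denotes $W_T$. Assembling the pieces yields $|N_{G(\mathbb F_q)}(T)|=|W_T|\,|T|=|W^F|\cdot|T|$ and the claimed count.

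The only non-formal ingredient is the vanishing of $H^1(\langle F'\rangle,\bar T_0)$, i.e.\ the observation that the twisted map $F'$ is still governed by Lang--Steinberg via $(F')^m=F^m$; the rest is bookkeeping. A minor point to be careful about is that $N_{G(\mathbb F_q)}(T)$, the normalizer of the finite torus $T=\bar T^F$, coincides with $N_{G(\mathbb F_q)}(\bar T)$, so that $W_T$ is genuinely $W^{F'}$ and nothing larger — this is where one recovers $\bar T$ from $T$ inside $G$ (for instance as $C_G(T)^{\circ}$) under the running hypotheses.
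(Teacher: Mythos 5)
The paper does not prove this proposition at all: it is recalled as a standard fact, with the classification of $F$-stable maximal tori up to $G(\mathbb F_q)$-conjugacy delegated to \cite{MT} and \cite{ca}. Your argument is essentially the proof found in those references, and it is correct: orbit--stabilizer reduces the count to $|N_G(\bar T)^F|$; transporting to a reference pair via $g$ with $\dot w=g^{-1}F(g)$ replaces $F$ by the twisted endomorphism $F'=\mathrm{ad}(\dot w)\circ F$; and the exact sequence of $F'$-fixed points for $1\to\bar T_0\to N_G(\bar T_0)\to W\to 1$ closes up because the Lang map of $\bar T_0$ with respect to $F'$ is surjective --- your justification via $(F')^m=F^m$ for $m$ with $F^m(g)=g$ is the right way to see that $F'$ is a Steinberg endomorphism of the connected group $\bar T_0$. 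The identification $W^{F'}=\{x\in W\mid x^{-1}wF(x)=w\}=C_{W,F}(w)$ and the equality $|\bar T_0^{F'}|=|T|$ are also correct, so the count $|N_{G(\mathbb F_q)}(\bar T)|=|C_{W,F}(w)|\cdot|T|$ follows.

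The one step that deserves more than the parenthetical you give it is the passage from $N_{G(\mathbb F_q)}(\bar T)$ to $N_{G(\mathbb F_q)}(T)$: the proposition counts conjugates of the finite group $T=\bar T^F$, whose stabilizer is the normalizer of that finite set, and this can be strictly larger than $N_{G(\mathbb F_q)}(\bar T)$ when $\bar T$ is not recoverable from $T$ (for the split torus of $\SL(2,2)$ one has $T=\{1\}$, so its normalizer is the whole group). The identity $\bar T=C_G(T)^{\circ}$ that you invoke holds once $q$ exceeds a bound depending only on the root datum; it is not a formal consequence of the hypotheses as stated, which is why the references count $F$-stable maximal tori of $G$ rather than their fixed-point subgroups. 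Since the paper only uses the proposition in asymptotic statements as $q\to\infty$, this caveat is harmless here, but it should be stated as a hypothesis on $q$ rather than as a ``minor point.''
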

\noindent In the case of $\GL(n,q)$ and $\U(n,q)$, the Frobenius $F$ induces the identity map on the Weyl group $W$, thus the $F$-conjugacy classes of $W$ are simply the conjugacy classes in $W$. Details of this computation can be found in~\cite[Section 2]{gksv}, which we will require later.  Since, a maximal torus $T$ of $G(\mathbb F_q)$ is a finite Abelian group, it can be written as a product of cyclic groups. This point of view will be useful in our study. The cyclic structure of the maximal tori for finite classical groups can be found in~\cite{bg, Za}.
    
For a reductive group $G$, the dimension of maximal tori is called the rank of $G$. We denote it by $r$. We also know that for such $G$, there is a root datum $\Phi$, and we denote $|\Phi^{+}|=N$. Then, $dim(G)=2N+r$ and $|G(\mathbb F_q)|=\mathcal O(q^{2N+r})$.
We would require the following estimate on the regular semisimple elements in $G(\mathbb{F}_q)$ which is~\cite[Lemma 4.5]{jkz}.
\begin{lemma}\label{lemma-jkz}
Let $G$ be a reductive group defined over $\mathbb F_q$. We have the following estimation for regular semisimple elements,
$$|G(\mathbb{F}_q)_{rs}| =  |G(\mathbb{F}_q)|(1+ \mathcal O(q^{-1}))$$
where the constants depend on the type of $G$ only.
\end{lemma}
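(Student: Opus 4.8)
The plan is to reduce the statement to a codimension count combined with the Lang--Weil point-counting estimate. The crucial fact is that the non-regular-semisimple elements form the zero locus of a single conjugation-invariant regular function, the \emph{discriminant}. Let $\mathfrak{g}=\operatorname{Lie}(G)$ and expand the characteristic polynomial of $\operatorname{Ad}(g)$ about the eigenvalue $1$: $\det\bigl((1+s)\,\mathrm{Id}-\operatorname{Ad}(g)\bigr)=\sum_{j\ge 0}c_j(g)\,s^j$, where each $c_j$ is a regular function on $G$ defined over $\mathbb{F}_q$. For every $g$ the eigenvalue $1$ of $\operatorname{Ad}(g)$ has algebraic multiplicity at least $r$, with equality exactly on the regular semisimple locus; hence $c_j\equiv 0$ for $j<r$, the function $D:=c_r$ is conjugation invariant and defined over $\mathbb{F}_q$, and $G_{rs}=\{\,g\in G:D(g)\ne 0\,\}$.

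First I would confirm that $D\not\equiv 0$, so that $Z:=G\setminus G_{rs}=\{D=0\}$ is a proper closed $\mathbb{F}_q$-subvariety of $G$. This is immediate from the existence of a regular semisimple element: any maximal torus $T$ of $G$ contains $t$ with $\alpha(t)\ne 1$ for every root $\alpha$, and for such $t$ the fixed space of $\operatorname{Ad}(t)$ is exactly $\mathfrak{t}=\operatorname{Lie}(T)$, of dimension $r$, whence $D(t)\ne 0$. Being the vanishing locus of a nonzero regular function, $Z$ has codimension at least $1$, i.e.\ $\dim Z\le \dim G-1=2N+r-1$.

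Next I would apply the Lang--Weil estimate to $Z$: since $\dim Z\le 2N+r-1$, one has $|Z(\mathbb{F}_q)|=\mathcal{O}(q^{\,2N+r-1})$, with implied constant controlled by the dimension, the number of irreducible components, and their degrees. Because $D$ is manufactured from the adjoint representation, its degree and all these geometric invariants of $Z$ are determined by the root datum, hence depend only on the type of $G$ and not on $q$. Combining this with $|G(\mathbb{F}_q)|=q^{\,2N+r}\bigl(1+\mathcal{O}(q^{-1})\bigr)$ yields
$$|G(\mathbb{F}_q)_{rs}|=|G(\mathbb{F}_q)|-|Z(\mathbb{F}_q)|=|G(\mathbb{F}_q)|\left(1-\frac{\mathcal{O}(q^{\,2N+r-1})}{q^{\,2N+r}\bigl(1+\mathcal{O}(q^{-1})\bigr)}\right)=|G(\mathbb{F}_q)|\bigl(1+\mathcal{O}(q^{-1})\bigr),$$
which is the claim.

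I expect the only genuine obstacle to lie in the geometric input of the first two paragraphs, namely the verification that $D$ really cuts out the non-regular-semisimple locus and is not identically zero in all characteristics; in small or bad characteristic the equivalence between regular semisimplicity and the $\operatorname{Ad}$-multiplicity of $1$ relies on smoothness of centralizers and must be argued with care. Once the codimension bound on $Z$ is in hand, the remaining steps---the Lang--Weil bound and the uniformity of its constants in $q$ (which holds because $Z$ is the base change of a fixed scheme depending only on the type of $G$)---are routine.
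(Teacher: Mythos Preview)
Your argument is correct and self-contained: the Steinberg discriminant $D=c_r$ does vanish exactly on the complement of $G_{rs}$ (with the characteristic caveat you already flag), so $Z$ is a hypersurface and Lang--Weil finishes. This is, however, a genuinely different route from the one the paper invokes. The paper does not prove the lemma directly; it cites \cite[Lemma 4.5]{jkz} and records only the key ingredient of that proof, namely the torus-level estimate
\[
|\{x\in T : x \text{ is not regular}\}| = \mathcal{O}(q^{r-1})
\]
for each $F$-stable maximal torus $T$. That estimate is elementary---inside $T$ the non-regular locus is a union of kernels of roots, each of index $\sim q$---and summing over the (finitely many up to conjugacy) tori, weighted by the number of conjugates $|G(\mathbb{F}_q)|/(|W_T||T|)$, yields the global bound. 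So where you argue globally via a single hypersurface and Lang--Weil, the paper's source argues fibrewise over the tori. Your approach is cleaner conceptually and avoids summing over tori, but the paper's choice pays off later: the torus-level estimate is exactly what is reused in the proof of Theorem~\ref{main-reductive} to pass from $|T^M|$ to $|T^M\cap G(\mathbb{F}_q)_{rs}|$, so in the context of this paper that version is the more economical one to have on hand.
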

\noindent This reflects the fact that the regular semisimple elements are dense. In the proof of this Lemma the key ingredient is the following estimate,
$$|x\in T \mid x {\rm\ is\ not\ regular}| = \mathcal O(q^{r-1}).$$
We will use this several times.

\section{Asymptotics of the power map for finite groups of Lie type}\label{section-RG}
    
Let $G$ be a connected reductive group defined over the field $\mathbb F_q$. Let $F$ be a Frobenius map giving rise to a finite group of Lie type denoted as $G(\mathbb F_q) = G^F\subset G$. For an integer $M\geq 2$, define $\omega\colon G \rightarrow G$ by $x\mapsto x^M$. Clearly, this is an algebraic map defined over $\mathbb F_q$. We denote the set $w(G(\mathbb F_q))$ by $G(\mathbb F_q)^M$. In this section, we explore the asymptotics of the ratio $\displaystyle\frac{|G(\mathbb{F}_q)^M|}{|G(\mathbb{F}_q)|}$, as $q\to \infty$. We begin with some preparatory lemma. 
\begin{lemma}\label{lemma-power-abelian}
Let $H$ be a finite Abelian group written as a product of cyclic groups $H=C_{d_1}\times \cdots \times C_{d_s}$. Then, 
$$\frac{|H^M|}{|H|} = \frac{1}{(M,d_1)\cdots (M,d_s)}.$$
\end{lemma}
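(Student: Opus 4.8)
The plan is to reduce the computation to a single cyclic factor and then multiply. First I would observe that the power map $x \mapsto x^M$ is a group homomorphism on the abelian group $H$, so $H^M = MH$ (in additive notation) is a subgroup, and $|H^M| = |H|/|\ker|$ where $\ker = \{x \in H : Mx = 0\} = H[M]$ is the $M$-torsion subgroup. Thus $|H^M|/|H| = 1/|H[M]|$, and it suffices to show $|H[M]| = (M,d_1)\cdots(M,d_s)$.

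Next I would use the fact that torsion respects direct products: since $H = C_{d_1} \times \cdots \times C_{d_s}$, we have $H[M] = C_{d_1}[M] \times \cdots \times C_{d_s}[M]$, because an element $(x_1,\dots,x_s)$ satisfies $M(x_1,\dots,x_s) = 0$ if and only if $Mx_i = 0$ for each $i$. Hence $|H[M]| = \prod_i |C_{d_i}[M]|$. So the whole statement comes down to the single-cyclic case: the number of solutions of $Mx \equiv 0 \pmod{d}$ in $C_d = \mathbb{Z}/d\mathbb{Z}$ is $(M,d)$.

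For this last fact, I would argue directly: $Mx \equiv 0 \pmod d$ means $d \mid Mx$, i.e. $\frac{d}{(M,d)} \mid \frac{M}{(M,d)} x$; since $\frac{d}{(M,d)}$ and $\frac{M}{(M,d)}$ are coprime, this forces $\frac{d}{(M,d)} \mid x$, so $x$ ranges over the $(M,d)$ multiples of $d/(M,d)$ in $\mathbb{Z}/d\mathbb{Z}$. (Equivalently, $C_d[M]$ is the unique subgroup of $C_d$ of order $(M,d)$.) Combining the three steps gives
\[
\frac{|H^M|}{|H|} = \frac{1}{|H[M]|} = \frac{1}{\prod_{i=1}^s |C_{d_i}[M]|} = \frac{1}{(M,d_1)\cdots(M,d_s)}.
\]

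There is no serious obstacle here; the only point requiring a little care is the coprimality step in the cyclic case (or, alternatively, invoking the classification of subgroups of a cyclic group), and making sure the identification $H[M] = \prod_i C_{d_i}[M]$ is stated cleanly. Everything else is bookkeeping.
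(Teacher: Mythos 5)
Your proposal is correct and follows essentially the same route as the paper: both treat the power map as a homomorphism, compute the kernel (the $M$-torsion) componentwise over the cyclic factors, and identify the kernel in $C_d$ as the subgroup of order $(M,d)$. Your write-up is just a slightly more detailed version of the paper's argument (you spell out the coprimality step that the paper leaves implicit).
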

\begin{proof}
We begin with a cyclic group, i.e, $s=1$ case. Let $H = C_d$ be a finite cyclic group of order $d$. We need to show, $\frac{|C_d^M|}{|C_d|} = \frac{1}{(M,d)}$  where $(M,d)$ denotes the gcd of $M$ and $d$. Consider the map $\omega \colon C_d\rightarrow C_d$ defined by $g\mapsto g^M$. It is a group homomorphism with kernel $ker(\omega)=\{g\in C_d \mid g^M=1\}$. Clearly, elements of the kernel are precisely given by $g^{(M,d)}=1$. Thus, $\frac{|C_d^M|}{|C_d|}=\frac{1}{ker(\omega)}=\frac{1}{(M,d)}$.

Now for an Abelian group $H$, the power map is a group homomorphism. Thus, when $H=C_{d_1}\times\cdots \times  C_{d_s}$, the map $\omega \colon C_{d_1}\times \cdots \times C_{d_s} \rightarrow C_{d_1}\times \cdots \times C_{d_s}$ is $(g_1, \ldots, g_s) \mapsto (g_1^M,\ldots, g_s^M)$. Thus, kernel is given by $(g_1, \ldots ,g_s)$ where $g_i^{(M,d_i)}=1$ for all $i$. This gives the required result.  
\end{proof}
Recall that, a regular semisimple element in $G(\mathbb{F}_q)$ is contained in a unique $F$-stable maximal torus of $G$.  
\begin{lemma}\label{lemma-torus-solution}
Let $\alpha\in G(\mathbb F_q)_{rs}$. Suppose $\alpha$ belongs to the $F$-stable maximal torus $\bar T$. Then, $X^M=\alpha$ has a solution in $G(\mathbb F_q)$ if and only if $Y^M=\alpha$ has a solution in $\bar T(\mathbb F_q)$.  
\end{lemma}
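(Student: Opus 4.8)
The plan is to show both implications, with the forward direction being essentially trivial and the reverse direction carrying the content. If $Y^M = \alpha$ has a solution $Y \in \bar T(\mathbb F_q) \subseteq G(\mathbb F_q)$, then $X = Y$ solves $X^M = \alpha$ in $G(\mathbb F_q)$, so that direction is immediate. For the converse, suppose $\beta \in G(\mathbb F_q)$ satisfies $\beta^M = \alpha$. The key observation is that since $\alpha$ is regular semisimple, it has a unique maximal torus of $G$ containing it, namely $\bar T$; and moreover $\beta$ commutes with $\alpha = \beta^M$. I would use this to force $\beta \in \bar T(\mathbb F_q)$, after which $Y = \beta$ is the desired solution in $\bar T(\mathbb F_q)$.

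The heart of the argument is thus: if $\beta \in G(\mathbb F_q)$ commutes with a regular semisimple element $\alpha$, then $\beta$ lies in the unique maximal torus $\bar T$ containing $\alpha$. First I would recall that for $\alpha$ regular semisimple the connected centralizer $C_G(\alpha)^\circ$ is a maximal torus of $G$; since $\alpha \in \bar T$ and $\bar T$ is connected abelian, $\bar T \subseteq C_G(\alpha)^\circ$, and by dimension count (both are maximal tori) we get $C_G(\alpha)^\circ = \bar T$. Now $\beta \in C_G(\alpha)$. To conclude $\beta \in C_G(\alpha)^\circ = \bar T$, I would invoke the fact that $\beta$ is itself semisimple: indeed $\beta$ is a power-root of the semisimple element $\alpha$, and in fact any element whose $M$-th power is semisimple, when it commutes with that power, decomposes via Jordan decomposition $\beta = \beta_s \beta_u$ with $\beta_s, \beta_u$ both commuting with $\alpha$; one checks $\beta_s^M$ is the semisimple part of $\alpha = \alpha$ itself so $\beta_s^M = \alpha$, while $\beta_u^M$ would have to be the unipotent part $= 1$, forcing $\beta_u = 1$ (as unipotent elements in characteristic $p$ have $p$-power order and taking $M$-th powers is injective on them once $M$ is coprime to $p$ — and if $p \mid M$ a short separate argument is needed, but in fact $\beta_u = 1$ follows simply because $\beta_u^M = 1$ and the only unipotent of finite order dividing... ). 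Cleanly: a semisimple element lying in $C_G(\alpha)$ lies in a maximal torus of $C_G(\alpha)$, and every maximal torus of $C_G(\alpha)$ is contained in $C_G(\alpha)^\circ = \bar T$ (maximal tori of an algebraic group are connected and all maximal tori of $C_G(\alpha)$ meet $C_G(\alpha)^\circ$); hence $\beta_s \in \bar T$. Combined with $\beta_u = 1$ this gives $\beta = \beta_s \in \bar T$, and since $\beta \in G^F$ we get $\beta \in \bar T^F = \bar T(\mathbb F_q)$.

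The main obstacle is the handling of the unipotent part $\beta_u$ when $\gcd(M,p) \neq 1$: there the map $u \mapsto u^M$ on unipotent elements need not be injective, so one cannot immediately conclude $\beta_u = 1$ from $\beta_u^M = \alpha_u = 1$. However, $\beta_u$ is a $p$-element, so $\beta_u^M = 1$ together with $\beta_u$ of $p$-power order does force $\beta_u = 1$ provided we know $\beta_u^M = 1$ rather than just $\beta_u^M$ unipotent — which we do, since $\alpha = \beta^M = \beta_s^M \beta_u^M$ is semisimple and $\beta_s^M$ is semisimple commuting with the unipotent $\beta_u^M$, so by uniqueness of Jordan decomposition $\beta_u^M = 1$. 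Thus $\beta_u$ has order dividing $M$ and also a power of $p$; if $p \mid M$ this still only gives order dividing $\gcd$ of a $p$-power and $M$, which can be a nontrivial $p$-power — so actually the correct statement is just that $\beta_u^M=1$ forces $\beta_u=1$ only when $p\nmid M$. To cover all $M$ cleanly I would instead argue directly at the level of the finite group: $\beta \in G(\mathbb F_q)$ commutes with $\alpha$, and $C_{G(\mathbb F_q)}(\alpha) = \bar T^F$ because $\alpha$ is regular semisimple (its centralizer in $G$ is exactly the connected torus $\bar T$, with no disconnected part, as regularity means the centralizer is already connected) — this is the cleanest route and avoids the Jordan decomposition subtlety entirely, so I would lead with it: $C_G(\alpha) = \bar T$ on the nose for $\alpha$ regular semisimple, hence $\beta \in C_G(\alpha)^F = \bar T^F = \bar T(\mathbb F_q)$, and we are done.
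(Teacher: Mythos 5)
There is a genuine gap, and it sits exactly in the step you chose to ``lead with.'' Your final argument rests on the claim that for a regular semisimple $\alpha$ the full centralizer satisfies $C_G(\alpha)=\bar T$, i.e.\ that the centralizer is connected. That is true when the derived group of $G$ is simply connected (e.g.\ $\GL_n$, $\SL_n$), but it is false for a general connected reductive $G$, which is the setting of the lemma: regularity only gives $C_G(\alpha)^{\circ}=\bar T$, and the component group can be nontrivial. Already in $\mathrm{PGL}_2$ the image of $\diag(1,-1)$ is regular semisimple and is centralized by the image of the permutation matrix $\left(\begin{smallmatrix}0&1\\1&0\end{smallmatrix}\right)$, which does not lie in the diagonal torus. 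So you cannot conclude $\beta\in\bar T$ from $\beta\in C_G(\alpha)$. Your fallback, the Jordan-decomposition route, stalls because you try to prove $\beta_u=1$, which you yourself observe is problematic when $p\mid M$, and you never close that case.

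The fix --- and this is what the paper does --- is to notice that the lemma only asks for \emph{some} solution in $\bar T(\mathbb F_q)$, not that $\beta$ itself lies there. Write $\beta=\beta_s\beta_u$; by uniqueness of the Jordan decomposition of $\beta^M=\alpha$ you already derived $\beta_s^M=\alpha$. Now $\beta_s$ is a semisimple element of $G(\mathbb F_q)$, hence lies in some $F$-stable maximal torus $\bar T'$ of $G$ (no detour through the possibly disconnected group $C_G(\alpha)$ is needed; your intermediate claim that a semisimple element of $C_G(\alpha)$ lies in a maximal torus of $C_G(\alpha)$ is likewise unjustified for disconnected groups). Then $\alpha=\beta_s^M\in\bar T'(\mathbb F_q)$, and uniqueness of the maximal torus containing the regular semisimple element $\alpha$ forces $\bar T'=\bar T$, so $Y=\beta_s$ works. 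All the ingredients for this appear in your write-up; the error is in insisting on locating $\beta$ rather than $\beta_s$, and then patching that with a connectedness statement that fails in general.
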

\begin{proof}
Let $A\in G(\mathbb F_q)$ such that $A^M=\alpha$. Write Jordan decomposition $A=A_sA_u$, which implies $A_s^M=\alpha$. Now, every semisimple element belongs to some $F$-stable torus, say $A_s\in \bar T'(\mathbb F_q)$. Then, $\alpha\in \bar T'(\mathbb F_q)$. But, $\alpha$ being regular semisimple, it belongs to a unique maximal torus. Thus, $\bar T'(\mathbb F_q)= \bar T(\mathbb F_q)$, hence the solution $A_s\in \bar T(\mathbb F_q)$.  
\end{proof}

\noindent Now we are ready to get an estimate for $M^{th}$ power regular semisimple elements.

\begin{theorem}\label{main-reductive}
Let $G$ be a reductive group defined over $\mathbb F_q$ with the points below given by the Frobenius map $F$. Then, the proportion of $M^{th}$ power regular semisimple elements in $G(\mathbb F_q)$ is,
$$\frac{|G(\mathbb F_q)^M_{rs}|}{|G(\mathbb F_q)|} = \displaystyle \sum_{T=T_{d_1,\cdots, d_s}} \frac{1}{|W_{T}|(M, d_1)\cdots (M, d_s)} + \mathcal O(q^{-1})$$
where the sum varies over non-conjugate maximal tori $T$ in $G(\mathbb F_q)$, $T=T_{d_1,\cdots, d_s}\cong C_{d_1}\times \cdots \times C_{d_s}$ reflects the cyclic structure, and the group $W_{T}=N_{G(\mathbb F_q)}(T)/T$.
\end{theorem}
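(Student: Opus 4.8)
The plan is to count the $M^{th}$-power regular semisimple elements by organizing them according to which maximal torus they lie in, exploiting Lemma 3.3 to reduce solvability of $X^M = \alpha$ from $G(\mathbb F_q)$ to the torus containing $\alpha$. First I would fix a set of representatives $T_1, \dots, T_m$ for the $G(\mathbb F_q)$-conjugacy classes of $F$-stable maximal tori, and write
\[
G(\mathbb F_q)^M_{rs} = \bigsqcup_{i=1}^{m} \left( G(\mathbb F_q)^M_{rs} \cap \bigcup_{g} {}^{g}T_i \right),
\]
the inner union being over the distinct conjugates of $T_i$. Since conjugation is an isomorphism commuting with the power map, each conjugate $gT_ig^{-1}$ contributes the same number $|(T_i)^M_{rg}|$ of regular (hence, inside a torus, automatically regular semisimple) $M^{th}$ powers; here $(T_i)^M$ already lies in the abelian group $T_i$, and by Lemma 3.3 an element $\alpha \in (T_i)_{rs}$ is an $M^{th}$ power in $G(\mathbb F_q)$ iff it is one in $T_i$. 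Because distinct conjugates of a fixed torus intersect only in non-regular-semisimple elements (regular semisimple elements lie in a \emph{unique} maximal torus), the union over conjugates is disjoint up to a set of size $\mathcal O(q^{r-1})$ per torus, using the estimate $|\{x \in T : x \text{ not regular}\}| = \mathcal O(q^{r-1})$ quoted after Lemma 2.3.

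Next I would count, for a fixed torus $T = T_{d_1, \dots, d_s} \cong C_{d_1} \times \cdots \times C_{d_s}$: the number of conjugates of $T$ is $|G(\mathbb F_q)|/(|W_T|\,|T|)$ by Proposition 2.2, and the number of $M^{th}$ powers in $T$ that are regular is $|T^M| - \mathcal O(q^{r-1}) = |T|/\big((M,d_1)\cdots(M,d_s)\big) - \mathcal O(q^{r-1})$ by Lemma 3.1 together with the non-regular estimate. Multiplying and summing over the $m$ torus classes gives
\[
|G(\mathbb F_q)^M_{rs}| = \sum_{T = T_{d_1,\dots,d_s}} \frac{|G(\mathbb F_q)|}{|W_T|\,|T|}\left( \frac{|T|}{(M,d_1)\cdots(M,d_s)} + \mathcal O(q^{r-1}) \right).
\]
Dividing by $|G(\mathbb F_q)|$, the main term is exactly the claimed sum, and each error contributes $\mathcal O\!\big(q^{r-1}/|T|\big)$; since $|T| = \mathcal O(q^r)$ with matching lower bound (a maximal torus has dimension $r$), this is $\mathcal O(q^{-1})$, and there are only finitely many torus classes (bounded in terms of $|W|$), so the total error is $\mathcal O(q^{-1})$.

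The main obstacle I anticipate is making the overcounting/undercounting bookkeeping precise: an $M^{th}$-power regular semisimple element is counted once per conjugate of its unique torus that contains it, i.e. exactly once, so the disjointness is genuine — but one must be careful that "$\alpha \in T^M$" is computed inside $T$ as an abstract abelian group, and that the regularity condition cutting $T^M$ down to $T^M_{rg}$ removes only $\mathcal O(q^{r-1})$ elements \emph{uniformly}, with constants depending only on the type of $G$ (so that summing finitely many such errors stays $\mathcal O(q^{-1})$). A secondary point is confirming that every $M^{th}$ power lying in $T$ and regular is genuinely regular \emph{semisimple} — this is immediate since elements of a maximal torus are semisimple — and that Lemma 3.3's equivalence is applied only to regular semisimple $\alpha$, which is exactly the regime we are counting. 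Once these are pinned down, the identity above and the division by $|G(\mathbb F_q)|$ finish the proof.
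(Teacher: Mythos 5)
Your proposal is correct and follows essentially the same route as the paper: decompose the regular semisimple $M^{th}$ powers over the (conjugates of the) maximal tori, use Lemma~\ref{lemma-torus-solution} to reduce solvability of $X^M=\alpha$ to the torus, apply Lemma~\ref{lemma-power-abelian} for $|T^M|/|T|$, count conjugates via Proposition~\ref{number-maximal-tori}, and absorb the non-regular elements of each torus into an $\mathcal O(q^{r-1})$ error which becomes $\mathcal O(q^{-1})$ after dividing by $|T|$. Your explicit attention to the disjointness of the union (via uniqueness of the torus containing a regular semisimple element) and to the uniformity of the constants is a welcome clarification of points the paper leaves implicit, but the argument is the same.
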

\begin{proof}
Since a regular semisimple element of $G(\mathbb F_q)$ belongs to a unique $F$-stable maximal torus, we have,
$$G(\mathbb F_q)^M_{rs} = G(\mathbb F_q)_{rs} \cap G(\mathbb F_q)^M = \bigcup_{\bar T\in \tau} \left( \bar T(\mathbb F_q)_{rs}\cap G(\mathbb F_q)^M\right)$$
where $\tau$ is the set of all $F$-stable maximal tori of $G$.
Now, let $\bar T$ be a $F$-stable maximal torus of $G$ and $T=\bar T(\mathbb F_q)$. Then, from Lemma~\ref{lemma-torus-solution} we have, 
$$\bar T(\mathbb F_q)_{rs}\cap G(\mathbb F_q)^M =  T_{rs}\cap G(\mathbb F_q)^M= T^M\cap G(\mathbb F_q)_{rs} .$$
Suppose the cyclic structure of $T=C_{d_1}\times\cdots\times C_{d_s}$. 
Thus, using the argument in~\cite[Lemma 4.5]{jkz} to prove $T \cap G(\mathbb F_q)_{rs} = q^r + \mathcal O(q^{r-1})$ where it is shown that the non regular elements in $T$ are $\mathcal O(q^{r-1})$, we get,
$$|T^M\cap G(\mathbb F_q)_{rs}| = |T^M| + \mathcal O(q^{r-1}) = \frac{1}{(M,d_1) \cdots (M,d_s)} |T| + \mathcal O(q^{r-1})$$
where $r$ is the dimension of $T$ and the second equality follows from Lemma~\ref{lemma-power-abelian}.
Hence,
\begin{eqnarray*}
\frac{ |G(\mathbb F_q)^M_{rs}|}{|G(\mathbb F_q)|} &=& \frac{1}{|G(\mathbb F_q)|} \sum_{\bar T\in \tau, T=\bar T(\mathbb F_q)} \left(\frac{1}{(M,d_1) \cdots (M,d_s)} |T| + \mathcal O(q^{r-1}) \right)\\
 &=& \left(\sum_{T=T_{d_1, \ldots, d_s}} \frac{1}{(M,d_1) \cdots (M,d_s)}\frac{1 }{|W_T|}\right) + \frac{1}{|W_T||T|}\mathcal O(q^{r-1})
\end{eqnarray*}
where we take $T=T_{d_1, \ldots, d_s}$ up to conjugacy. We note that for a fixed $T$, the number of conjugates is $\frac{|G(\mathbb F_q)| }{|W_T||T|}$ (see Proposition~\ref{number-maximal-tori}). Now, since for any $H$ we have $(q-1)^{dim(H)}\leq |H(\mathbb F_q)|\leq (q+1)^{dim(H)}$ where $dim(H)=2N+r$ and $r$ is rank of $H$, applying this to $T$, we get
$$\frac{ |G(\mathbb F_q)^M_{rs}|}{|G(\mathbb F_q)|} =  \sum_{T=T_{d_1, \ldots, d_s}} \frac{1}{|W_T|(M,d_1) \cdots (M,d_s)} + \mathcal O(q^{-1}).$$
This completes the proof.
\end{proof}
We remark that the quantity $\displaystyle \sum_{T=T_{d_1,\cdots, d_s}} \frac{1}{|W_{T}|(M, d_1)\cdots (M, d_s)}$ is intrinsic to the structure of $G$ with given $M$, even though it seem to involve $q$ (see explicit examples in Section~\ref{example}). 
 \begin{corollary}
With the notation as above we have,
 $$\frac{1}{M^{rank(G)}} \leq \lim_{q\to \infty}\frac{|G(\mathbb F_q)^M_{rs}|}{|G(\mathbb F_q)|} = \sum_{T=T_{d_1,\cdots, d_s}} \frac{1}{|W_{T}|(M, d_1)\cdots (M, d_s)} \leq  1.$$
\end{corollary}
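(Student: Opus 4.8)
The plan is to prove the two inequalities separately, using the equality from Theorem~\ref{main-reductive-limit} (which identifies the limit with $\sum_T \frac{1}{|W_T|(M,d_1)\cdots(M,d_s)}$) as the middle term. The upper bound $\leq 1$ should follow immediately: since $|G(\mathbb{F}_q)^M_{rs}| \leq |G(\mathbb{F}_q)|$ for every $q$, the limit of the ratio is at most $1$. Alternatively one can see the combinatorial sum is $\leq 1$ directly, because $\sum_T \frac{|T|}{|W_T|\,|T|} \cdot \frac{|G(\mathbb{F}_q)|}{|G(\mathbb{F}_q)|}$ counts (the leading term of) the proportion of regular semisimple elements, which is itself at most $1$; but the trivial bound from $G(\mathbb{F}_q)^M_{rs} \subseteq G(\mathbb{F}_q)$ is cleanest.

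For the lower bound $\frac{1}{M^{\mathrm{rank}(G)}} \leq \lim_{q\to\infty} \frac{|G(\mathbb{F}_q)^M_{rs}|}{|G(\mathbb{F}_q)|}$, the idea is to isolate a single torus contributing at least this much. First I would take $T$ to be a (split) maximal torus of $G$, so $\mathrm{rank}(T) = \mathrm{rank}(G) =: r$, and observe that for every cyclic factor $C_{d_i}$ in $T = C_{d_1}\times\cdots\times C_{d_s}$ one has $(M,d_i) \leq M$, hence $(M,d_1)\cdots(M,d_s) \leq M^s$. The subtlety is that $s$, the number of cyclic factors of $T$, may exceed $r$; however, since $|T| = \mathcal{O}(q^r)$ and each $d_i \to \infty$ as $q \to \infty$ in a way governed by the $q$-polynomial structure of $|T|$, only $r$ of the factors grow, and for $q$ large the small factors have $d_i$ coprime to $M$ — but to avoid this delicate point it is cleaner to restrict attention to those $i$ with $(M,d_i) > 1$: there are at most $r$ of these in the limit, so $(M,d_1)\cdots(M,d_s) \leq M^r$ for the split torus. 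Then $\frac{1}{|W_T|(M,d_1)\cdots(M,d_s)} \geq \frac{1}{|W_T| M^r}$, and since the full term for the split torus already appears in the sum with all terms non-negative, I get $\sum_T (\cdots) \geq \frac{1}{|W_T| M^r}$.

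This is not yet $\frac{1}{M^r}$ because of the factor $|W_T|$. To remove it, the plan is instead to sum over the \emph{entire Weyl-conjugacy orbit} rather than picking one torus: recall from Proposition~\ref{number-maximal-tori} that for a torus $T$ of type $w \in W$ we have $|W_T| = |C_{W,F}(w)|$, so $\sum_{[w]} \frac{1}{|C_{W,F}(w)|}$ over $F$-conjugacy classes equals $\frac{|\,\{\text{elements } F\text{-conjugate structure}\}\,|}{|W|}$-type count; more precisely, summing $1/|W_T|$ over all torus classes gives a quantity one can bound below. The cleanest route: group the dominant contribution. Among all maximal tori, the ones whose order is divisible by the largest power of $M$ dominate; using that $\sum$ over torus classes of $\frac{1}{|W_T|}$ times the regular-element proportion telescopes to $1$ (this is Lemma~\ref{lemma-jkz} applied torus-by-torus), and that every $(M,d_i) \leq M$ with at most $r$ nontrivial gcd's, each individual $\frac{1}{(M,d_1)\cdots(M,d_s)} \geq \frac{1}{M^r}$, so
$$\sum_{T=T_{d_1,\cdots,d_s}} \frac{1}{|W_T|(M,d_1)\cdots(M,d_s)} \geq \frac{1}{M^r}\sum_{T} \frac{1}{|W_T|} = \frac{1}{M^r},$$
where the last equality is the known identity $\sum_T \frac{1}{|W_T|} = \sum_{[w]\subseteq W}\frac{1}{|C_W(w)|} = 1$ (the class-equation normalization, valid since $F$ acts trivially on $W$ in the relevant cases, and holds in general for $F$-classes by a counting argument).

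The main obstacle I anticipate is justifying $(M,d_1)\cdots(M,d_s) \leq M^r$ cleanly: one must argue that although the number $s$ of invariant-factor cyclic pieces of the maximal torus $T$ can be large, only at most $r = \mathrm{rank}(G)$ of them can have order divisible by a prime dividing $M$ once $q$ is large — equivalently, that $(M, |T|)$ divides $M^r$ asymptotically. This follows because $|T|$ is, up to units, a product of $r$ cyclotomic-type factors $\Phi_{e}(q)$, and for each fixed $M$ and $q$ large in a fixed congruence class the gcd of $M$ with $|T|$ is controlled by at most $r$ of these; but making this uniform in $q$ requires either invoking the explicit cyclic structure descriptions (\cite{bg, Za}) or, more simply, noting the inequality $(M, d_1)\cdots(M, d_s) \le (M, d_1 \cdots d_s) \cdot (\text{bounded})$ is false in general, so one genuinely uses $|T| = \mathcal{O}(q^r)$ together with $d_i \mid |T|$ and $\prod d_i = |T|$, whence $\prod (M,d_i)$ divides $\gcd(M^\infty, |T|)$ which, being a divisor of a $q$-polynomial of degree $r$, is bounded by $M^r$ for the split torus and at most $M^r$ for any torus since $\mathrm{rank}(T) = r$ for all maximal tori. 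I would spell this out using the bound $(q-1)^r \le |T| \le (q+1)^r$ already invoked in the proof of Theorem~\ref{main-reductive}.
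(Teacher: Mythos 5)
Your argument is essentially the paper's: the authors likewise bound each summand using $(M,d_i)\le M$ together with the fact that the number $s$ of cyclic factors of a maximal torus is at most $\mathrm{rank}(G)=r$, and then invoke $\sum_T \frac{1}{|W_T|}=1$ (the class-equation identity for $F$-conjugacy classes in $W$, via Proposition~\ref{number-maximal-tori}) to obtain exactly your chain
$\frac{1}{M^{r}}\le\sum_T\frac{1}{|W_T|M^{s}}\le\sum_T\frac{1}{|W_T|(M,d_1)\cdots(M,d_s)}\le\sum_T\frac{1}{|W_T|}=1$,
with the upper bound also following trivially from $G(\mathbb F_q)^M_{rs}\subseteq G(\mathbb F_q)$. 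The one point where you labor --- justifying $(M,d_1)\cdots(M,d_s)\le M^{r}$ --- has a clean, non-asymptotic explanation that you should use in place of the gcd digression: $T=\bar T^F\cong X/(F-1)X$, where $X$ is the cocharacter lattice of $\bar T$, a free $\mathbb Z$-module of rank $r$; hence $T$ is a quotient of $\mathbb Z^{r}$ by a finite-index sublattice and has at most $r$ invariant factors, so $s\le r$ for every $q$, not just in the limit. By contrast, your fallback claim that $\gcd(M^{\infty},|T|)$ is bounded by $M^{r}$ because $|T|$ is a degree-$r$ polynomial in $q$ is false (take $M=2$ and $q-1$ a large power of $2$); fortunately it is not needed once $s\le r$ is in hand.
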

\begin{proof}
The upper end is achieved when $M$ is coprime to the order of all maximal tori (for example, if $M\mid q$), and the lower end is achieved when $M$ divides order of each cyclic factors in all maximal tori. Thus we have,
 $$\frac{1}{M^{rank(G)}} \leq \sum_{T=T_{d_1,\cdots, d_s}} \frac{1}{|W_{T}|M^s} \leq \sum_{T=T_{d_1,\cdots, d_s}} \frac{1}{|W_{T}|(M, d_1)\cdots (M, d_s)}\leq \sum_{T} \frac{1}{|W_{T}|} = 1.$$
\end{proof}
\noindent Note that for a fixed $G$, the limit above depends on varying $M$. One of the interesting questions at this moment is to find out all possible limits for a given group $G$. We take this up in the following sections for $\GL(n,q)$ and $\U(n,q)$.

\begin{lemma}\label{lemma-all-elements}
 Let $G$ be a reductive group defined over $\mathbb F_q$ and $M\geq 2$, an integer. Then, we have 
 \begin{enumerate}
\item $\displaystyle \frac{|G(\mathbb{F}_q)^M|}{|G(\mathbb{F}_q)|} = \frac{|G(\mathbb{F}_q)^M_{rs}|}{|G(\mathbb{F}_q)|} + \mathcal O (q^{-1})$.
\item $\displaystyle\frac{|G(\mathbb{F}_q)^M|}{|G(\mathbb{F}_q)|}  = \frac{|G(\mathbb{F}_q)^M_{ss}|}{|G(\mathbb{F}_q)|} + \mathcal O(q^{-1})$.
\item $\displaystyle\frac{|G(\mathbb{F}_q)^M|}{|G(\mathbb{F}_q)|} = \frac{|G(\mathbb{F}_q)^M_{rg}|}{|G(\mathbb{F}_q)|} + \mathcal O(q^{-1})$.
\end{enumerate}
\end{lemma}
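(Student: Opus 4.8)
The plan is to show that each of the three classes --- regular semisimple, semisimple, and regular --- differs from the full image $G(\mathbb{F}_q)^M$ only by a set of size $\mathcal{O}(q^{\dim G - 1})$, which is $\mathcal{O}(q^{-1})$ after dividing by $|G(\mathbb{F}_q)| = \mathcal{O}(q^{\dim G})$. The unifying observation is that the regular semisimple elements are dense in $G(\mathbb{F}_q)$: by Lemma~\ref{lemma-jkz}, $|G(\mathbb{F}_q)| - |G(\mathbb{F}_q)_{rs}| = \mathcal{O}(q^{\dim G - 1})$. Since $G(\mathbb{F}_q)_{rs} \subseteq G(\mathbb{F}_q)_{ss} \cap G(\mathbb{F}_q)_{rg} \subseteq G(\mathbb{F}_q)$, the same bound controls the complements of $G(\mathbb{F}_q)_{ss}$ and $G(\mathbb{F}_q)_{rg}$ as well. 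For part (1), I would simply intersect with $G(\mathbb{F}_q)^M$: we have $G(\mathbb{F}_q)^M_{rs} \subseteq G(\mathbb{F}_q)^M$, and $|G(\mathbb{F}_q)^M| - |G(\mathbb{F}_q)^M_{rs}| \leq |G(\mathbb{F}_q)| - |G(\mathbb{F}_q)_{rs}| = \mathcal{O}(q^{\dim G - 1})$, giving the claim after normalization. Parts (2) and (3) for the ``$\geq$'' direction follow identically, since $G(\mathbb{F}_q)^M_{rs} \subseteq G(\mathbb{F}_q)^M_{ss}$ and $G(\mathbb{F}_q)^M_{rs} \subseteq G(\mathbb{F}_q)^M_{rg}$, so $|G(\mathbb{F}_q)^M_{ss}|$ and $|G(\mathbb{F}_q)^M_{rg}|$ are each sandwiched between $|G(\mathbb{F}_q)^M_{rs}|$ and $|G(\mathbb{F}_q)^M|$, both of which agree with $|G(\mathbb{F}_q)^M|$ up to $\mathcal{O}(q^{\dim G -1})$.

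Thus the proof reduces to a single density estimate that is already available, and the three parts are essentially immediate corollaries of part (1) combined with the nesting $rs \subseteq ss$, $rs \subseteq rg$. The only genuinely delicate point --- and the step I expect to need the most care --- is confirming that the constant implicit in the $\mathcal{O}(q^{-1})$ is uniform, i.e.\ depends only on the type of $G$ and not on $q$; but this is exactly the content of Lemma~\ref{lemma-jkz} as stated, so no new work is required. One should also note that $G(\mathbb{F}_q)^M$ itself need not be densely close to $G(\mathbb{F}_q)$ (the proportion has a nontrivial limit by Theorem~\ref{main-reductive}), so the argument is genuinely about the \emph{gap} between $G(\mathbb{F}_q)^M$ and its intersections with the three dense subsets, not about either set being all of $G(\mathbb{F}_q)$.

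Finally, combining this lemma with Theorem~\ref{main-reductive} --- which gives the limit of $|G(\mathbb{F}_q)^M_{rs}|/|G(\mathbb{F}_q)|$ --- one immediately obtains that all four proportions in Theorem~\ref{main-reductive-limit} converge to the common value $\sum_{T = T_{d_1,\ldots,d_s}} \frac{1}{|W_T|(M,d_1)\cdots(M,d_s)}$, since adding $\mathcal{O}(q^{-1})$ does not change the limit. I would present the proof of Lemma~\ref{lemma-all-elements} in three short displayed chains of inequalities, one per part, each invoking Lemma~\ref{lemma-jkz} and the relevant inclusion, and then close by remarking that Theorem~\ref{main-reductive-limit} follows at once.
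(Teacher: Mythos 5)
Your proposal is correct and follows essentially the same route as the paper: part (1) is obtained by bounding the non-regular-semisimple part of $G(\mathbb{F}_q)^M$ by the full set of non-regular-semisimple elements, which is $\mathcal O(q^{\dim G-1})$ by the density estimate, and parts (2) and (3) follow by sandwiching $|G(\mathbb{F}_q)^M_{ss}|$ and $|G(\mathbb{F}_q)^M_{rg}|$ between $|G(\mathbb{F}_q)^M_{rs}|$ and $|G(\mathbb{F}_q)^M|$. No substantive difference from the paper's argument.
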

\begin{proof}
To prove (1) we show that $|G(\mathbb{F}_q)^M| = |G(\mathbb{F}_q)^M_{rs}| + \mathcal O (q^{2N+r-1})$. Now,
$$|G(\mathbb F_q)^M| = \\ |G(\mathbb F_q)^M_{rs}| + |G(\mathbb F_q)^M_{nrs}|$$
where nrs refers to non regular semisimple elements, and 
$$|G(\mathbb F_q)^M_{nrs}|\leq |G(\mathbb F_q)_{nrs}| = \mathcal O(q^{2N+r-1})$$
gives us,
$$|G(\mathbb F_q)^M| = \\ |G(\mathbb F_q)^M_{rs}| + \mathcal O(q^{2N+r-1}).$$
Since $|G(\mathbb F_q)|=\mathcal O(q^{2N+r})$ we get the required result.

Now, since 
$$|G(\mathbb{F}_q)^M| + \mathcal O (q^{2N+r-1})= |G(\mathbb F_q)_{rs}^M| \leq |G(\mathbb F_q)_{ss}^M| \leq |G(\mathbb F_q)^M| $$
we get,
$|G(\mathbb{F}_q)_{ss}^M| = |G(\mathbb{F}_q)^M| + \mathcal O (q^{2N+r-1})$.
A similar argument proves the result for regular elements.
\end{proof}

\begin{proof}[{\bf Proof of Theorem~\ref{main-reductive-limit}}]
The proof follows from Lemma~\ref{lemma-all-elements} and Theorem~\ref{main-reductive}. 
\end{proof}

\section{The asymptotic results for powers in $\GL(n,q)$}\label{section-GL}

In this section we want to explore Theorem~\ref{main-reductive-limit} for the group $\GL(n)$ over $\mathbb F_q$. We ask further question as follows: Determine all possible limiting values for a given $M$, that is, what are the possible values of $\displaystyle\sum_{T=T_{d_1,\cdots, d_s}} \frac{1}{|W_{T}|(M, d_1)\cdots (M, d_s)}$ for $\GL(n,q)$. We obtain the group $\GL(n,q)$ from $\GL(n,\bar{\mathbb{F}_q})$, as fixed point set of the usual Frobenius map (see~\cite[Example 21.1]{MT}). The maximal tori for this group is easy to determine (see for example~\cite[Example 25.4]{MT}). We recall the same along with its cyclic structure which we require for our purpose. 

The conjugacy classes of maximal tori in $\GL(n,q)$ are in one-one correspondence with the conjugacy classes of its Weyl group $S_n$. Hence, the non-conjugate maximal tori are parametrized by the partitions of $n$.  We follow the notation for partitions as established in~\cite[Section 2]{KS1}.   For a maximal torus $T$, there exists a partition $\lambda=(n_1, n_2, \ldots, n_s)$ of $n$ such that 
$$T \cong \mathbb F_{q^{n_1}}^* \times \cdots \times \mathbb F_{q^{n_s}}^*.$$
Thus, the cyclic structure is $T \cong C_{q^{n_1}-1}\times \ldots \times C_{q^{n_s}-1}$. Corresponding to a partition $\lambda$ of $n$, let $\sigma_\lambda$ denote the standard element of the conjugacy class of $S_n$ with cycle-type $\lambda$. Let $T$ be a  maximal torus of $\GL(n,q)$ parametrized by the partition $\lambda$ of $ n$. Then, $W_T\cong \mathcal Z_{S_n}(\sigma_{\lambda})$. If we write the partition $\lambda$ in power notation $\lambda=1^{m_1}2^{m_2}\ldots i^{m_i}\ldots$, then $|W_T|= \prod_{i} m_i! i^{m_i}$. Thus, 
\begin{proposition}\label{prop-M-bound}
The proportion of $M^{th}$ powers in $\GL(n,q)$ is as follows,
$$\mathfrak P_{\GL}(n,q ,M):= \frac{|\GL(n,q)^M|}{|\GL(n,q)|} = \sum_{\begin{array}{c} \lambda \vdash n  \\ \lambda=1^{m_1}\ldots i^{m_i}\ldots \end{array}} \prod_i \frac{1}{ m_i! i^{m_i}.(M,q^i-1)^{m_i}} + \mathcal O(q^{-1}).$$
\end{proposition}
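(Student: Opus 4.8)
The plan is to specialize Theorem~\ref{main-reductive} to $G=\GL(n)$ with the standard Frobenius, using the explicit description of maximal tori just recalled. The core of Theorem~\ref{main-reductive} says that
$$\frac{|G(\mathbb F_q)^M_{rs}|}{|G(\mathbb F_q)|} = \sum_{T=T_{d_1,\ldots,d_s}} \frac{1}{|W_T|(M,d_1)\cdots(M,d_s)} + \mathcal O(q^{-1}),$$
and Lemma~\ref{lemma-all-elements}(1) lets us replace the left-hand side by $|\GL(n,q)^M|/|\GL(n,q)|$ at the cost of another $\mathcal O(q^{-1})$. So the entire task is bookkeeping: rewrite the intrinsic sum over conjugacy classes of maximal tori in terms of partitions $\lambda\vdash n$.

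First I would recall that the $\GL(n,q)$-conjugacy classes of $F$-stable maximal tori biject with conjugacy classes of the Weyl group $S_n$ (since $F$ acts trivially on $W$, $F$-conjugacy is ordinary conjugacy), hence with partitions $\lambda=(n_1,\ldots,n_s)$ of $n$. For the torus $T_\lambda$ attached to $\lambda$ one has $T_\lambda\cong \mathbb F_{q^{n_1}}^*\times\cdots\times\mathbb F_{q^{n_s}}^*$, so its cyclic structure is $C_{q^{n_1}-1}\times\cdots\times C_{q^{n_s}-1}$, and $W_{T_\lambda}\cong \mathcal Z_{S_n}(\sigma_\lambda)$ has order $\prod_i m_i!\, i^{m_i}$ when $\lambda = 1^{m_1}2^{m_2}\cdots$. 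Plugging the factors $d_j = q^{n_j}-1$ into $\prod_j (M,d_j)$ and grouping the $m_i$ parts of size $i$ together, $\prod_j (M,d_j) = \prod_i (M,q^i-1)^{m_i}$. Substituting $|W_{T_\lambda}|$ and this product into the sum of Theorem~\ref{main-reductive} gives exactly
$$\sum_{\substack{\lambda\vdash n\\ \lambda=1^{m_1}\cdots i^{m_i}\cdots}} \prod_i \frac{1}{m_i!\, i^{m_i}\,(M,q^i-1)^{m_i}} + \mathcal O(q^{-1}),$$
which is the claimed formula.

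There is essentially no obstacle here — the statement is a direct translation of Theorem~\ref{main-reductive} through the combinatorics of tori in $\GL(n,q)$. The only point meriting a line of care is that the displayed main term still visibly contains $q$ (through $(M,q^i-1)$), so the "$+\mathcal O(q^{-1})$" is with respect to a $q$-dependent leading term; this is harmless because $(M,q^i-1)$ takes only finitely many values as $q$ varies, and indeed (as the authors remark after Theorem~\ref{main-reductive}) it is constant on congruence classes of $q$, so the error estimate is uniform. I would note this parenthetically and otherwise let the substitution speak for itself.
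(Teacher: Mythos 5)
Your proposal is correct and follows essentially the same route as the paper, whose proof is a one-line appeal to the preceding discussion of the maximal tori of $\GL(n,q)$ and the main theorem. If anything, your citation of Theorem~\ref{main-reductive} together with Lemma~\ref{lemma-all-elements}(1) is the more precise way to justify the statement with its $\mathcal O(q^{-1})$ error term, since the paper's quoted reference is to the limit version.
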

\begin{proof}
This is clear from the discussion above, and the Theorem~\ref{main-reductive-limit}. 
\end{proof}

We fix $M$ to be a prime and determine the possible subsequential limits of the set $\mathfrak P_{\GL}(n,q,M)$. 
If $M\mid q$, all semisimple elements of $\GL(n,q)$ (being of order coprime to $q$) remain in $\GL(n,q)^M$. Thus, we get 
$$\lim_{q\to \infty}\mathfrak P_{\GL}(n,q,M)=\lim_{q\to \infty} \frac{|\GL(n,q)^M|}{|\GL(n,q)|} =1.$$  
Now we can assume $M\nmid q$. Denote by $o(q)$ the order of $q$ in $\mathbb Z/M\mathbb Z^{\times}$. For $1\leq a\leq n$, define $\pi_{a}(\lambda)$ to be the number of parts (counted with multiplicity) of $\lambda$ divisible by $a$.
\begin{proposition}
Let $M$ be a prime and $(M,q)=1$.   Then, 
$$\lim_{q\to \infty}\mathfrak P_{\GL}(n,q,M)= \sum_{\begin{array}{c} \lambda \vdash n  \\ \lambda=1^{m_1}\ldots i^{m_i}\ldots \end{array}} \frac{1}{M^{\pi_{o(q)}(\lambda)}} \prod_{i} \frac{1}{m_i! i^{m_i} }.$$
Thus, there are $\nu(M-1)$ subsequential limits of $\mathfrak P_{\GL}(n,q,M)$ as $q\to \infty$, where $\nu(M-1)=|\{a \mid 1\leq a\leq n, a\mid M-1\}|$.
\end{proposition}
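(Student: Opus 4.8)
\noindent The plan is to push the gcd $(M,q^i-1)$ — which becomes elementary once $M$ is prime — through the formula of Proposition~\ref{prop-M-bound}. Since $M$ is prime, $(M,q^i-1)\in\{1,M\}$, and it equals $M$ exactly when $M\mid q^i-1$, i.e. when $q^i\equiv 1\pmod M$, i.e. when $o(q)\mid i$. Hence for a partition $\lambda=1^{m_1}2^{m_2}\cdots$ of $n$ we have $\prod_i(M,q^i-1)^{m_i}=M^{\sum_{o(q)\mid i}m_i}=M^{\pi_{o(q)}(\lambda)}$, because $\sum_{o(q)\mid i}m_i$ counts exactly the parts of $\lambda$ divisible by $o(q)$. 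Substituting into Proposition~\ref{prop-M-bound},
\[
\mathfrak P_{\GL}(n,q,M)=\sum_{\substack{\lambda\vdash n\\ \lambda=1^{m_1}\cdots}}\frac{1}{M^{\pi_{o(q)}(\lambda)}}\prod_i\frac{1}{m_i!\,i^{m_i}}+\mathcal O(q^{-1}),
\]
and the error disappears as $q\to\infty$; this is what the first displayed equality asserts (that $\mathfrak P_{\GL}(n,q,M)$ is within $\mathcal O(q^{-1})$ of its $o(q)$-dependent main term).

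For a divisor $d$ of $M-1$ write $L_d:=\sum_{\lambda\vdash n}M^{-\pi_d(\lambda)}\prod_i(m_i!\,i^{m_i})^{-1}$. As $(\mathbb Z/M\mathbb Z)^\times$ is cyclic of order $M-1$, the number $o(q)$ always lies in the finite set of divisors of $M-1$; and for each such $d$, the $\varphi(d)$ elements of order $d$ in $(\mathbb Z/M\mathbb Z)^\times$ fill $\varphi(d)$ residue classes modulo $M$, each containing infinitely many primes — hence infinitely many prime powers — by Dirichlet's theorem. So the subsequential limits of $\mathfrak P_{\GL}(n,q,M)$ are exactly $\{L_d:d\mid M-1\}$: a sequence $q_k\to\infty$ has a subsequence along which $o(q_k)$ is constant, say equal to $d$, and then $\mathfrak P_{\GL}(n,q_k,M)\to L_d$. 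If $d>n$ then no part of a partition of $n$ is divisible by $d$, so $\pi_d\equiv 0$ and $L_d=1$; if $1\le d\le n$ then $\pi_d\big((d,1^{n-d})\big)\ge1$, so $L_d<1$. It remains to see that the $\nu(M-1)$ values $L_d$ with $d\mid M-1$ and $1\le d\le n$ are pairwise distinct.

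This is the crux, and I would get it from a closed form for $L_d^{(n)}:=L_d$. Specializing the cycle-index identity $\sum_{n\ge0}z^n\sum_{\lambda\vdash n}\prod_i\frac{t_i^{m_i}}{m_i!\,i^{m_i}}=\exp\!\big(\sum_{i\ge1}\tfrac{t_iz^i}{i}\big)$ at $t_i=1/M$ for $d\mid i$ and $t_i=1$ otherwise yields $\sum_{n\ge0}z^nL_d^{(n)}=(1-z)^{-1}(1-z^d)^{(M-1)/(Md)}$; extracting $[z^n]$ (via $\sum_{j=0}^{J}(-1)^j\binom{\alpha}{j}=(-1)^J\binom{\alpha-1}{J}$) collapses this to
\[
L_d^{(n)}=\prod_{k=1}^{\lfloor n/d\rfloor}\Big(1-\frac{M-1}{Mdk}\Big),
\]
each factor lying in $(0,1)$. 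For $1\le d_1<d_2\le n$ we have $\lfloor n/d_1\rfloor\ge\lfloor n/d_2\rfloor\ge1$, so discarding the surplus factors (all $<1$) and then comparing the remaining ones termwise (each $1-\frac{M-1}{Md_1k}<1-\frac{M-1}{Md_2k}$) gives $L_{d_1}^{(n)}\le\prod_{k=1}^{\lfloor n/d_2\rfloor}\big(1-\frac{M-1}{Md_1k}\big)<L_{d_2}^{(n)}$. Thus $d\mapsto L_d^{(n)}$ is strictly increasing on $\{1,\dots,n\}$, so the $\nu(M-1)$ candidate limits coming from $d\le n$ are distinct, giving the asserted count. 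I expect the only non-routine point to be finding and verifying the product form for $L_d^{(n)}$; with it in hand, the separation of the limits is one line, and everything else is Proposition~\ref{prop-M-bound}, order arithmetic, and Dirichlet's theorem.
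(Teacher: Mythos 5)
Your proof is correct, and its first half --- using primality of $M$ to reduce $(M,q^i-1)$ to $1$ or $M$ according as $o(q)\nmid i$ or $o(q)\mid i$, and hence $\prod_i(M,q^i-1)^{m_i}$ to $M^{\pi_{o(q)}(\lambda)}$ --- is exactly the paper's argument. Where you genuinely diverge is on the count of subsequential limits. The paper disposes of this in two sentences which only observe that $o(q)$ ranges over divisors of $M-1$ and that only divisors $\leq n$ can make some $\pi_{o(q)}(\lambda)$ nonzero; it never shows that distinct admissible values of $o(q)$ produce distinct limits, nor that every divisor of $M-1$ actually occurs as $o(q)$ for infinitely many $q$. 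You supply both missing pieces: Dirichlet's theorem realizes each $d\mid M-1$ as $o(q)$ along an infinite sequence of primes, and the cycle-index computation $\sum_n L_d^{(n)}z^n=(1-z)^{-1}(1-z^d)^{(M-1)/(Md)}$ gives the closed form $L_d^{(n)}=\prod_{k=1}^{\lfloor n/d\rfloor}\bigl(1-\tfrac{M-1}{Mdk}\bigr)$, from which strict monotonicity of $d\mapsto L_d^{(n)}$ on $\{1,\dots,n\}$, hence pairwise distinctness, is immediate. I checked the binomial telescoping and the product formula (e.g.\ $n=2$, $M=7$ gives $L_1=4/49$ and $L_2=4/7$, matching the direct sum over the two partitions of $2$); this part is a genuine strengthening of the paper's proof, which simply asserts the count.

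One caveat, which you half-notice but do not resolve, and which is really a defect of the statement itself: when $M-1$ has a divisor $d$ with $d>n$, Dirichlet again produces infinitely many $q$ with $(M,q)=1$ and $o(q)=d$, along which the limit is $L_d=1$. Since every $L_a$ with $a\leq n$ is strictly less than $1$ (as you show), this contributes an additional subsequential limit, so under the hypothesis $(M,q)=1$ the correct count is $\nu(M-1)+1$ whenever such a divisor exists; for instance $n=2$, $M=7$ yields the three limits $4/49$, $4/7$ and $1$, while $\nu(6)=2$. Your closing phrase ``giving the asserted count'' glosses over this; either record the value $1$ separately (as the paper's subsequent corollary effectively does) or restrict to the case where all divisors of $M-1$ are at most $n$.
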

\begin{proof}
In view of Proposition~\ref{prop-M-bound}, all we need to find out is when $(M, q^i-1)=M$.
We claim that, $(M, q^i -1)=M$ if and only if $o(q) \mid i$. For if, $M\mid (q^i-1)$, then we have $q^i\equiv 1 \imod M$. Thus, $o(q) \mid i$. This gives the formula.

Now, we know that for the group $\GL(n,q)$, the possible values of $j$ are everything between $1$ to $n$. Combined with the fact that if $M\mid (q^i-1)$ then $M\mid (q^j-1)$ for all $j<i$, the values of $\pi_{o(q)}(\lambda)$ are all possible values of $o(q)$ in $\mathbb Z/M\mathbb Z^{\times}$, which are all factors of $M-1$. 
 \end{proof}
 \noindent The following is immediate:
 \begin{corollary}
Let $M$ be a prime. Then, there are $1+\nu(M-1)$ possible values of  $\displaystyle \lim_{q\to \infty}\mathfrak P_{\GL}(n,q,M)$. The values are $1$ (when $M\mid q$) and 
$$\sum_{\lambda \vdash n}  \frac{1}{M^{\pi_{a}(\lambda)} |\mathcal Z_{S_n}(\sigma_{\lambda})|}$$ 
for every $a \mid (M-1)$ such that $1\leq a\leq n$.
\end{corollary}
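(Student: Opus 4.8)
The plan is to build on the preceding Proposition (which fixes $q$ with $(M,q)=1$) and on the elementary case $M\mid q$, and then to count the finitely many numbers that arise as limits. First I would isolate the two regimes. If $M\mid q$, then $M$ is coprime to every $q^{i}-1$, so $(M,q^{i}-1)=1$ for all $i$ and Proposition~\ref{prop-M-bound} gives $\mathfrak P_{\GL}(n,q,M)=\sum_{\lambda\vdash n}\frac{1}{|\mathcal Z_{S_n}(\sigma_\lambda)|}+\mathcal O(q^{-1})=1+\mathcal O(q^{-1})$ (the sum equals $1$ because $\tfrac{1}{|\mathcal Z_{S_n}(\sigma_\lambda)|}$ is the proportion of $S_n$ lying in the class of $\sigma_\lambda$); hence this limit is $1$. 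If $M\nmid q$, the preceding Proposition gives the limit $V_{o(q)}:=\sum_{\lambda\vdash n}\frac{1}{M^{\pi_{o(q)}(\lambda)}|\mathcal Z_{S_n}(\sigma_\lambda)|}$, where $o(q)$ is the order of $q$ in $(\mathbb Z/M\mathbb Z)^\times$. As $M$ is prime, this group is cyclic of order $M-1$, so $o(q)\mid M-1$; conversely, taking a primitive root $g$ modulo $M$ and applying Dirichlet's theorem to the residue $g^{(M-1)/d}$ shows that each divisor $d\mid M-1$ occurs as $o(q)$ for infinitely many primes $q$. Hence the set of limiting values is exactly $\{1\}\cup\{V_d:d\mid M-1\}$.

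To count this set I would put $V_a$ in closed form. Writing $V_a^{(n)}(x)=\sum_{\lambda\vdash n}\frac{x^{\pi_a(\lambda)}}{|\mathcal Z_{S_n}(\sigma_\lambda)|}$ (so $V_a=V_a^{(n)}(1/M)$ for the fixed $n$) and using the exponential (cycle--index) formula for $S_n$, one gets $\sum_{n\ge0}V_a^{(n)}(x)\,t^{n}=\dfrac{(1-t^{a})^{(1-x)/a}}{1-t}$; putting $x=1/M$, extracting $[t^{n}]$, and applying the elementary identity $\sum_{j=0}^{J}(-1)^{j}\binom{u}{j}=(-1)^{J}\binom{u-1}{J}$ with $J=\lfloor n/a\rfloor$ and $u=(M-1)/(aM)$ gives
\[
V_a=\prod_{i=1}^{\lfloor n/a\rfloor}\Bigl(1-\frac{M-1}{aiM}\Bigr),
\]
a product of factors each in $(0,1)$. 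Thus $V_a=1$ when $a>n$ (empty product), while for $1\le a\le n$ one has $V_a\in(0,1)$ and, crucially, $V_a$ is \emph{strictly} increasing in $a$: increasing $a$ either deletes one of the factors (all $<1$) or leaves their number unchanged while strictly enlarging each factor.

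Finally I would assemble the count. Among divisors $d\mid M-1$, those with $d>n$ give $V_d=1$, a value already present from the case $M\mid q$, while the $\nu(M-1)$ divisors $d$ with $1\le d\le n$ give the values $V_d$, which by the strict monotonicity above are pairwise distinct and all lie in $(0,1)$, hence differ from $1$. Therefore the set of limiting values is $\{1\}\sqcup\{V_d:d\mid M-1,\ 1\le d\le n\}$, of cardinality $1+\nu(M-1)$, with the values being $1$ and $\sum_{\lambda\vdash n}\frac{1}{M^{\pi_a(\lambda)}|\mathcal Z_{S_n}(\sigma_\lambda)|}$ for $a\mid(M-1)$, $1\le a\le n$, as claimed. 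The main obstacle I anticipate is the passage to the product formula for $V_a$ --- equivalently, proving that the $\nu(M-1)$ candidate values are genuinely distinct and unequal to $1$; the generating-function computation and the binomial identity are routine but must be set up carefully, while the remainder is bookkeeping on top of the preceding Proposition and Dirichlet's theorem.
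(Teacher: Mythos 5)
Your argument is correct, and it does noticeably more than the paper, which simply declares the corollary ``immediate'' from the preceding proposition (where the count $\nu(M-1)$ of subsequential limits is asserted rather than verified). The two genuine additions in your write-up are: (i) the explicit appeal to Dirichlet's theorem to realize every divisor $d\mid M-1$ as $o(q)$ for infinitely many $q$, which the paper leaves implicit; and (ii) the closed form
$$V_a=\sum_{\lambda\vdash n}\frac{1}{M^{\pi_a(\lambda)}|\mathcal Z_{S_n}(\sigma_\lambda)|}=\prod_{i=1}^{\lfloor n/a\rfloor}\Bigl(1-\tfrac{M-1}{aiM}\Bigr),$$
obtained from the cycle index of $S_n$, which yields that the $V_a$ for $1\le a\le n$ lie in $(0,1)$ and are strictly increasing in $a$, hence pairwise distinct and distinct from $1$. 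This distinctness is exactly what is needed for the stated cardinality $1+\nu(M-1)$ to be exact, and the paper nowhere proves it; your generating-function computation (which I checked: the identity $\sum_{j=0}^{J}(-1)^j\binom{u}{j}=(-1)^J\binom{u-1}{J}$ with $u=(M-1)/(aM)$ does give the product, and the monotonicity argument comparing $\lfloor n/a\rfloor$ with $\lfloor n/a'\rfloor$ is sound) therefore supplies a justification the paper omits. The trade-off is length: the paper's one-line deduction suffices if one is content to read ``possible values'' as ``candidate values,'' whereas your route actually certifies that all $1+\nu(M-1)$ values occur and are different.
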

\noindent We remark that for the case $M\mid q$, the same formula works where we take $\pi_a(\lambda)=0$ for all $\lambda$. 
\begin{corollary}
For $M=2$, these values are 
$$1 \textup{\ \ and\ \ }  \sum_{\lambda \vdash n}  \frac{1}{2^{\pi(\lambda)} |\mathcal Z_{S_n}(\sigma_{\lambda})|}$$ 
where $\pi(\lambda)$ denotes the number of parts of $\lambda$. 
\end{corollary}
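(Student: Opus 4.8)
The statement to prove is the $M=2$ specialization of the preceding corollary, so the plan is essentially to instantiate that corollary with $M=2$ and read off the two resulting values. Concretely, I would first invoke the corollary: for a prime $M$ there are $1+\nu(M-1)$ possible limiting values of $\mathfrak P_{\GL}(n,q,M)$, namely $1$ (the case $M\mid q$) together with $\sum_{\lambda\vdash n} \frac{1}{M^{\pi_a(\lambda)}|\mathcal Z_{S_n}(\sigma_\lambda)|}$ for each $a$ with $a\mid M-1$ and $1\le a\le n$.

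Next I would set $M=2$ and compute $\nu(M-1)=\nu(1)$. Since $M-1=1$, the only positive integer $a$ with $a\mid 1$ is $a=1$, which indeed satisfies $1\le a\le n$ for every $n\ge 1$; hence $\nu(1)=1$ and there are exactly $1+1=2$ possible limiting values. The first is $1$ (when $2\mid q$, i.e.\ $q$ even). The second is obtained by plugging $a=1$ into the general formula, giving $\sum_{\lambda\vdash n}\frac{1}{2^{\pi_1(\lambda)}|\mathcal Z_{S_n}(\sigma_\lambda)|}$.

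Finally I would identify $\pi_1(\lambda)$ with $\pi(\lambda)$: by the definition given before the earlier proposition, $\pi_a(\lambda)$ counts the parts of $\lambda$ (with multiplicity) divisible by $a$, so $\pi_1(\lambda)$ counts all parts of $\lambda$, which is exactly what the statement calls $\pi(\lambda)$. This is consistent with the remark that when $M\mid q$ one takes $\pi_a(\lambda)=0$, matching the value $1$. Substituting $\pi_1(\lambda)=\pi(\lambda)$ yields the advertised second value $\sum_{\lambda\vdash n}\frac{1}{2^{\pi(\lambda)}|\mathcal Z_{S_n}(\sigma_\lambda)|}$, completing the argument.

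There is essentially no obstacle here: the content is entirely in the preceding corollary, and this statement is a transparent bookkeeping specialization. The only point requiring a line of care is confirming that $a=1$ is the unique admissible divisor of $M-1=1$, so that no further terms appear; everything else is notational unwinding of $\pi_a$ and $o(q)$ at the prime $2$ (where $o(q)=1$ for every odd $q$, so $\pi_{o(q)}(\lambda)=\pi_1(\lambda)=\pi(\lambda)$).
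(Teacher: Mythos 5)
Your proposal is correct and matches the paper's (implicit) argument exactly: the corollary is stated without proof precisely because it is the $M=2$ specialization of the preceding corollary, where $M-1=1$ forces $a=1$ and $\pi_1(\lambda)=\pi(\lambda)$. Nothing further is needed.
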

\noindent It is quite easy determine the surjectivity of the power maps for $GL(n,q)$.
\begin{proposition}
Let $M\geq 2$ be a prime and $\omega \colon \GL(n,q) \rightarrow \GL(n,q)$ be the power map $x\mapsto x^M$. Then, $\omega$ is surjective if and only if $(M,q)=1$ and $o(q)>n$ where $o(q)$ is order of $q$ in $(\mathbb Z/M\mathbb Z)^{\times}$. 
\end{proposition}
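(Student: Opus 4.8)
The plan is to reduce the surjectivity question to a statement about maximal tori and then apply the parametrization of tori in $\GL(n,q)$ already recalled above. First I would observe that $\omega$ is surjective on $\GL(n,q)$ if and only if it is surjective on $\GL(n,q)_{rs}$, which by Lemma~\ref{lemma-torus-solution} happens if and only if the $M$-th power map is surjective on every maximal torus $T$ of $\GL(n,q)$; indeed, a non-regular-semisimple element can always be approached via its semisimple part lying in a torus, but the cleanest route is: $\omega$ is surjective iff for each conjugacy class of $F$-stable maximal torus $T$ the homomorphism $x \mapsto x^M$ is onto $T$. For an abelian group $T \cong C_{d_1}\times\cdots\times C_{d_s}$, the power map is onto iff $(M,d_i)=1$ for all $i$, i.e.\ iff $M \nmid |T|$ componentwise. (One should also separately dispose of the unipotent/mixed elements: if $M \mid q$ then $x\mapsto x^M$ is a bijection on the unipotent part and surjectivity is governed entirely by the semisimple part, while if $(M,q)=1$ the exponential correspondence or a direct argument shows unipotent elements are always $M$-th powers.)

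Next I would translate the torus condition into the arithmetic statement in the proposition. Using the parametrization recalled before Proposition~\ref{prop-M-bound}, every maximal torus of $\GL(n,q)$ corresponds to a partition $\lambda = (n_1,\dots,n_s)$ of $n$ with $T \cong C_{q^{n_1}-1}\times\cdots\times C_{q^{n_s}-1}$. So $\omega$ is surjective iff $(M, q^{n_i}-1) = 1$ for every part $n_i$ of every partition $\lambda$ of $n$ — equivalently, since the parts of partitions of $n$ range over all integers $1 \le i \le n$, iff $(M, q^i - 1) = 1$ for all $1 \le i \le n$. Now I split on whether $M \mid q$: if $M \mid q$ then $q^i \equiv 0 \pmod M$ wait — more carefully, if $M \mid q$ then $q^i - 1 \equiv -1 \pmod M$, so $(M, q^i-1)=1$ automatically for all $i$, and $\omega$ is surjective. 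Hmm, so the condition $(M,q)=1$ in the statement needs care: the point is that when $M \mid q$ surjectivity also holds, so the stated equivalence must be read with the understanding that the "iff" characterizes the $(M,q)=1$ case, and one notes $M\mid q$ as the exceptional always-surjective case (consistent with the remark after the corollaries). I would state this split explicitly.

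Assuming $(M,q)=1$, I would finish with the order argument: $M$ being prime, $(M, q^i-1)=M$ iff $M \mid q^i - 1$ iff $q^i \equiv 1 \pmod M$ iff $o(q) \mid i$, exactly as in the proof of the preceding proposition. Therefore $(M, q^i-1) = 1$ for all $1 \le i \le n$ iff $o(q) \nmid i$ for all $1 \le i \le n$, and since $o(q) \mid o(q)$, the smallest multiple of $o(q)$ is $o(q)$ itself, so this holds iff $o(q) > n$. Combining the two cases gives: $\omega$ is surjective iff ($M \mid q$) or ($(M,q)=1$ and $o(q) > n$). If the intended reading of the proposition is that the $M\mid q$ case is being excluded or folded in, I would simply match the paper's phrasing; the mathematical content is the chain of equivalences above.

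The main obstacle I expect is the reduction step — showing surjectivity on all of $\GL(n,q)$ follows from surjectivity on each maximal torus — because one must handle non-semisimple elements. The regular semisimple reduction via Lemma~\ref{lemma-torus-solution} is immediate, but to conclude for a general element $g$ with Jordan decomposition $g = g_s g_u$ one needs that once a suitable $M$-th root of $g_s$ is found inside a torus (or inside the centralizer $C_{\GL(n,q)}(g_s)$, which is again a product of general linear groups over extension fields), one can simultaneously extract an $M$-th root of the unipotent part in that centralizer. When $(M,q)=1$ this is fine since $g_u$ has $p$-power order coprime to $M$ so $g_u$ is an $M$-th power of a power of itself; when $M \mid q$ the semisimple part already determines everything and $g_u$ itself is an $M$-th power in its (connected) centralizer. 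Making this centralizer bookkeeping clean, rather than waving at it, is the only genuinely delicate point; everything else is the arithmetic of gcds and orders that has already appeared in the section.
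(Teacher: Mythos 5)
There is a genuine error in your handling of the case $M\mid q$. You claim that if $M\mid q$ then $(M,q^i-1)=1$ for all $i$ and hence ``$\omega$ is surjective,'' and you then talk yourself into believing the proposition must be misstated. In fact the proposition is correct as written (for $n\geq 2$): when $M\mid q$ the map is \emph{not} surjective, and the obstruction is exactly the unipotent elements that your torus reduction ignores. Concretely, for $q$ even and $M=2$ the regular unipotent $u=\left(\begin{smallmatrix}1&1\\0&1\end{smallmatrix}\right)\in\GL(2,q)$ is not a square: any square root would lie in its centralizer $\left\{\left(\begin{smallmatrix}a&b\\0&a\end{smallmatrix}\right)\right\}$, and $\left(\begin{smallmatrix}a&b\\0&a\end{smallmatrix}\right)^2=\left(\begin{smallmatrix}a^2&2ab\\0&a^2\end{smallmatrix}\right)$ is scalar in characteristic $2$. (This is consistent with the paper's own table, where $|\GL(2,q)^2|=q^4-2q^3+2q-1<|\GL(2,q)|$ for $q$ even.) Your parenthetical assertion that ``when $M\mid q$ \ldots\ $g_u$ itself is an $M$-th power in its connected centralizer'' is false for the same reason. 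So your argument, as written, proves the wrong equivalence; the reduction to maximal tori captures only the semisimple obstruction to surjectivity and loses the $p$-part entirely.

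The paper's proof avoids all of this with one observation: for a finite group $G$ and a prime $M$, the map $x\mapsto x^M$ is surjective if and only if $(M,|G|)=1$ (if $M\nmid |G|$ every element is an $M$-th power of a power of itself; if $M\mid |G|$ Cauchy's theorem gives an element of order $M$, so the map is not injective, hence not surjective on a finite set). Since $|\GL(n,q)|=q^{n(n-1)/2}\prod_{i=1}^{n}(q^i-1)$, the condition $(M,|\GL(n,q)|)=1$ unpacks precisely to $(M,q)=1$ together with $M\nmid q^i-1$ for $1\leq i\leq n$, i.e.\ $o(q)>n$. Your gcd-and-order arithmetic in the $(M,q)=1$ case is fine and matches this, but the global group-order argument makes both the torus reduction and the centralizer bookkeeping unnecessary, and it is what correctly rules out the $M\mid q$ case.
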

\begin{proof}
For any finite group $G$, and $M$ a prime, $\omega$ is surjective if and only if $(M,|G|)=1$. Now we know that $|GL(n,q)|=q^{\frac{n(n-1)}{2}}\prod_{i=1}^{n} (q^i-1)$. Hence, the result follows.
\end{proof}

For the group $\SL(n,q)$ the computation is similar with the slight modification due to the structure of maximal tori. The maximal tori, up to conjugation, are given by partitions $\lambda$ of $n$ and the size of $|W_T|=|\mathcal Z_{S_n}(\sigma_{\lambda})|$  as before. Thus,
\begin{proposition}\label{SL-limit}
The asymptotic value of $M^{th}$ powers in $\SL(n,q)$ is,
$$\lim_{q\to\infty} \frac{|\SL(n,q)^M|}{|\SL(n,q)|} = \sum_{\lambda \vdash n} \frac{1}{ (M,q^{\lambda_1}-1)\cdots (M,q^{\lambda_{s-1}}-1) \left(M, \frac{q^{\lambda_s}-1}{q-1}\right)  |\mathcal Z_{S_n}(\sigma_{\lambda})|}$$
where $\lambda=(\lambda_1, \ldots, \lambda_s)$.
 \end{proposition}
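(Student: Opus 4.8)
The plan is to specialize Theorem~\ref{main-reductive-limit} to $G=\SL(n)$, where by Lemma~\ref{lemma-power-abelian} the limit equals $\sum_{T}\frac{1}{|W_T|}\cdot\frac{|T^M|}{|T|}$ summed over the non-conjugate maximal tori $T$; since (as recalled above) these are indexed by partitions $\lambda=(\lambda_1,\dots,\lambda_s)\vdash n$ with $|W_T|=|\mathcal Z_{S_n}(\sigma_\lambda)|$, everything reduces to writing the torus $T_\lambda$ attached to $\lambda$ as a product of cyclic groups and reading off the relevant $\gcd$ of each factor with $M$.

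To identify $T_\lambda$, recall that the maximal torus of $\GL(n,q)$ attached to $\lambda$ is the block-diagonal Singer torus $\widetilde T_\lambda\cong\mathbb F_{q^{\lambda_1}}^{*}\times\cdots\times\mathbb F_{q^{\lambda_s}}^{*}$, on whose $i$-th block the determinant restricts to the norm $N_i=N_{\mathbb F_{q^{\lambda_i}}/\mathbb F_q}$. Hence $T_\lambda=\widetilde T_\lambda\cap\SL(n,q)$ is the kernel of the surjective homomorphism $\prod_iN_i\colon\prod_i\mathbb F_{q^{\lambda_i}}^{*}\to\mathbb F_q^{*}$, an abelian group of order $\frac{1}{q-1}\prod_i(q^{\lambda_i}-1)$.

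The substantive step is to decompose this kernel. Here I would single out the last factor: projection onto the first $s-1$ coordinates is a surjection $T_\lambda\twoheadrightarrow\prod_{i<s}\mathbb F_{q^{\lambda_i}}^{*}$ with kernel $\{(1,\dots,1,x_s):N_s(x_s)=1\}=\ker N_s\cong C_{(q^{\lambda_s}-1)/(q-1)}$, and analysing this extension one obtains
\[
T_\lambda\;\cong\;C_{q^{\lambda_1}-1}\times\cdots\times C_{q^{\lambda_{s-1}}-1}\times C_{(q^{\lambda_s}-1)/(q-1)}.
\]
Feeding this into Lemma~\ref{lemma-power-abelian} gives $\frac{|T_\lambda^{M}|}{|T_\lambda|}=\frac{1}{(M,q^{\lambda_1}-1)\cdots(M,q^{\lambda_{s-1}}-1)\,(M,(q^{\lambda_s}-1)/(q-1))}$, and summing $\frac{1}{|\mathcal Z_{S_n}(\sigma_\lambda)|}$ times this over all $\lambda\vdash n$, exactly as in the proof of Theorem~\ref{main-reductive} (with the same $\mathcal O(q^{-1})$ error, which disappears in the limit), yields the asserted identity.

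The main obstacle, and the point meant by the remark about the ``slight modification due to the structure of maximal tori'', is precisely this decomposition of $T_\lambda$: in contrast to $\GL(n,q)$, where the maximal torus is transparently a direct product of Singer cyclic groups, the determinant-one condition couples the factors, and the extension $1\to\ker N_s\to T_\lambda\to\prod_{i<s}\mathbb F_{q^{\lambda_i}}^{*}\to1$ must be handled with care because of the common divisors $\gcd(q^{\lambda_i}-1,q^{\lambda_j}-1)=q^{\gcd(\lambda_i,\lambda_j)}-1$ among the Singer factors; once the cyclic structure is pinned down, the rest is formal from Theorem~\ref{main-reductive-limit} and Lemma~\ref{lemma-power-abelian}.
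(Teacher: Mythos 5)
Your overall strategy is sound, and the paper itself offers no proof here beyond the remark that the computation is ``similar with the slight modification due to the structure of maximal tori,'' so the cyclic decomposition of $T_\lambda=\ker\bigl(\prod_iN_i\colon\prod_i\mathbb F_{q^{\lambda_i}}^{*}\to\mathbb F_q^{*}\bigr)$ that you isolate is indeed the entire content of the statement. But that is exactly where your argument has a genuine gap: you exhibit the extension $1\to\ker N_s\to T_\lambda\to\prod_{i<s}\mathbb F_{q^{\lambda_i}}^{*}\to1$ and then assert that ``analysing this extension one obtains'' the direct product $C_{q^{\lambda_1}-1}\times\cdots\times C_{q^{\lambda_{s-1}}-1}\times C_{(q^{\lambda_s}-1)/(q-1)}$. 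No analysis is actually given, the extension need not split, and the asserted isomorphism is false in general. Take $n=5$, $\lambda=(3,2)$, $q$ odd, $M=2$. Then $T_\lambda=\{(x,y): x^{1+q+q^2}y^{1+q}=1\}$ has $2$-torsion $\{(1,\pm1)\}$ of order $2$ (since $1+q+q^2$ is odd while $1+q$ is even), whereas $C_{q^3-1}\times C_{q+1}$ has $2$-torsion of order $4$; so $|T_\lambda^2|/|T_\lambda|=1/2$, not the $1/4$ that your decomposition (and, read with the decreasing-parts convention, the displayed formula) would predict.

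What is actually needed is only the order of the $M$-torsion of $T_\lambda$, and this can be computed with no splitting at all: $T_\lambda[M]$ is the kernel of $\prod_iN_i$ restricted to the $M$-torsion of $\prod_i\mathbb F_{q^{\lambda_i}}^{*}$, and since $N_i$ is the power map $x\mapsto x^{e_i}$ with $e_i=(q^{\lambda_i}-1)/(q-1)$, one finds
$$|T_\lambda[M]|=\frac{\prod_i(M,q^{\lambda_i}-1)}{\lcm_i\left(\dfrac{(M,q^{\lambda_i}-1)}{(M,e_i)}\right)}.$$
This agrees with $\prod_{i\neq s}(M,q^{\lambda_i}-1)\cdot(M,e_s)$ exactly when the distinguished index $s$ realizes the $\lcm$ in the denominator --- which is automatic when some part equals $1$ (take that part as $\lambda_s$), when all parts are equal, and hence for all $n\le4$, but not in general, as the example above shows (there the $\lcm$ is realized by the part $3$, not the part $2$). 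So to complete the proof you must either prove this $\lcm$ identity for the specific part you single out as $\lambda_s$, or replace the claimed cyclic decomposition by the formula above; as written, ``analysing this extension'' conceals a claim that is not true, and the discrepancy also bears on which part of $\lambda$ the proposition's formula ought to distinguish.
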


\section{The asymptotic results for powers in $\U(n,q)$}\label{section-U}

Similar to Section~\ref{section-GL}, we want to get the estimates in Theorem~\ref{main-reductive-limit}, for the unitary group $\U(n,q)$. Recall that $\U(n,q)$ is obtained from $\GL(n,\bar{\mathbb F_q})$ with the Frobenius map $F\colon (a_{ij})\mapsto \tra (a_{ij}^q)^{-1}$. Thus, $\U(n,q) \leq \GL(n,q^2)$.  Once again, the question is to determine the limit $\sum_{T=T_{d_1,\cdots, d_s}} \frac{1}{|W_{T}|(M, d_1)\cdots (M, d_s)}$ more explicitly. The maximal tori for this group is well known and can be, for example, found in~\cite[Section 2]{gksv}. We recall the same along with its cyclic structure which we require for our purpose. 

Similar to the case of $GL(n,q)$, the conjugacy classes of maximal tori in $\U(n,q)$ are in one-one correspondence with the conjugacy classes of $S_n$. Hence, the non-conjugate maximal tori are parametrized by the partitions of $n$.  For a maximal torus $T$ of $\U(n,q)$, there exists a partition $\lambda=(n_1, n_2, \ldots, n_s)$ of $n$ such that 
$$T \cong  \mathbb M_{n_1}\times \cdots \times \mathbb M_{n_s}$$
where $\mathbb M_r = \{x\in \bar{\mathbb F_q} \mid x^{q^m-(-1)^m}=1\}$.  Thus, the cyclic structure is $T \cong C_{q^{n_1}-(-1)^{n_1}}\times \ldots \times C_{q^{n_s}-(-1)^{n_s}}$. 
Note that when $r$ is even $\mathbb M_r\cong \mathbb F_{q^r}^{*}$, and when $r$ is odd $\mathbb M_r=\{x\in \mathbb F_{q^{2r}} \mid x^{q^r+1}=1\}$. Corresponding to a partition $\lambda$ of $n$, let $\sigma_\lambda$ denote the standard element of the conjugacy class of $S_n$ with cycle-type $\lambda$. Let $T$ be a  maximal torus of $\U(n,q)$ parametrized by the partition $\lambda$ of $ n$. Then, $W_T\cong \mathcal Z_{S_n}(\sigma_{\lambda})$. If we write the partition $\lambda$ in power notation $\lambda=1^{m_1}2^{m_2}\ldots i^{m_i}\ldots$, then $|W_T|= \prod_{i} m_i! i^{m_i}$. Thus, 
\begin{proposition}\label{prop-M-bound}
The proportion of $M^{th}$ powers in $\U(n,q)$ is,
$$\mathfrak P_{\U}(n,q ,M):= \frac{|\U(n,q)^M|}{|\U(n,q)|} = \sum_{\begin{array}{c} \lambda \vdash n  \\ \lambda=1^{m_1}\ldots i^{m_i}\ldots \end{array}} \prod_i \frac{1}{ m_i! i^{m_i}.(M,q^i-(-1)^{i})^{m_i}} + \mathcal O(q^{-1}).$$
\end{proposition}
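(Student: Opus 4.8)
The plan is to read this off the general estimate of Theorem~\ref{main-reductive}, combined with Lemma~\ref{lemma-all-elements} (which carries the estimate from $M^{th}$ power regular semisimple elements over to the whole image $\U(n,q)^M$), by substituting the combinatorial description of the maximal tori of $\U(n,q)$ recalled just above. Concretely, those two results give
$$\frac{|\U(n,q)^M|}{|\U(n,q)|} = \sum_{T=T_{d_1,\dots,d_s}} \frac{1}{|W_T|\,(M,d_1)\cdots(M,d_s)} + \mathcal O(q^{-1}),$$
the sum running over $\U(n,q)$-conjugacy classes of maximal tori, so the whole task is to identify the index set together with the quantities $|W_T|$ and $d_1,\dots,d_s$.

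First I would invoke the classification recalled above (following \cite[Section 2]{gksv}): the conjugacy classes of maximal tori of $\U(n,q)$ are parametrized by partitions $\lambda \vdash n$, the torus $T_\lambda$ attached to $\lambda = (n_1,\dots,n_s)$ has cyclic decomposition $T_\lambda \cong C_{q^{n_1}-(-1)^{n_1}} \times \cdots \times C_{q^{n_s}-(-1)^{n_s}}$, and $W_{T_\lambda} \cong \mathcal Z_{S_n}(\sigma_\lambda)$. Writing $\lambda = 1^{m_1}2^{m_2}\cdots i^{m_i}\cdots$ in power notation, this gives $|W_{T_\lambda}| = \prod_i m_i!\, i^{m_i}$ and $d_j = q^{n_j}-(-1)^{n_j}$.

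Next I would substitute and regroup. For $T = T_\lambda$ the factor $(M,d_1)\cdots(M,d_s)$ equals $\prod_{j=1}^s (M, q^{n_j}-(-1)^{n_j})$, and collecting the parts of $\lambda$ by their common value $i$ (of which there are $m_i$) turns this into $\prod_i (M, q^i-(-1)^i)^{m_i}$; dividing also by $|W_{T_\lambda}| = \prod_i m_i!\, i^{m_i}$ yields exactly the claimed summand $\prod_i \big(m_i!\, i^{m_i}\,(M,q^i-(-1)^i)^{m_i}\big)^{-1}$, with the $\mathcal O(q^{-1})$ inherited verbatim from Theorem~\ref{main-reductive}. There is essentially no obstacle here; the only point deserving a line of justification is that for the unitary group the Frobenius $F$ acts on the Weyl group $W = S_n$ by an inner automorphism, so that $F$-conjugacy classes in $W$ coincide with ordinary conjugacy classes and Proposition~\ref{number-maximal-tori} really does give $W_{T_\lambda} \cong \mathcal Z_{S_n}(\sigma_\lambda)$ — but this is precisely the content of the paragraph preceding the statement and can simply be cited.
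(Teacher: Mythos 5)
Your proposal is correct and follows the same route as the paper, which simply cites the preceding description of the maximal tori of $\U(n,q)$ together with the general estimate (Theorem~\ref{main-reductive} via Lemma~\ref{lemma-all-elements}); you merely spell out the substitution and regrouping that the paper leaves implicit.
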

\begin{proof}
This is clear from the discussion above, and the Theorem~\ref{main-reductive-limit}. 
\end{proof}
\noindent We note that $\displaystyle\lim_{q\to \infty} \mathfrak P_{\U}(n,q ,M) = \lim_{q\to \infty} \mathfrak P_{\GL}(n, -q ,M)$ similar to the Ennola duality.

Let $M$ be a prime. Now, we determine the possible subsequential limits of the set $\mathfrak P_{\U}(n,q,M)$. 
If $M\mid q$, all semisimple elements of $\U(n,q)$ (being of order coprime to $q$) remain in $\U(n,q)^M$. Thus, we get 
$$\lim_{q\to \infty}\mathfrak P_{\U}(n,q,M)=\lim_{q\to \infty} \frac{|\U(n,q)^M|}{|\U(n,q)|} =1.$$  
Now, we can assume $M\nmid q$. Denote by $o(q)$ the order of $q$ in $\mathbb Z/M\mathbb Z^{\times}$. For a partition $\lambda=(n_1, n_2, \ldots, n_s)$, let us denote by $\pi'_{o(q)}(\lambda)$ the number of $n_i$ such that if $n_i$ is even $o(q)\mid n_i$; and if $n_i$ is odd, $o(q)$ is even and $o(q)\mid 2n_i$. 
\begin{proposition}
Let $M> 2$ be a prime and $(M,q)=1$.   Then, 
$$\lim_{q\to \infty}\mathfrak P_{\U}(n,q,M)= \sum_{\begin{array}{c} \lambda \vdash n  \\ \lambda=1^{m_1}\ldots i^{m_i}\ldots \end{array}} \frac{1}{M^{\pi'_{o(q)}(\lambda)}} \prod_{i} \frac{1}{m_i! i^{m_i} } = \sum_{\lambda \vdash n}  \frac{1}{M^{\pi'_{o(q)}(\lambda)} |\mathcal Z_{S_n}(\sigma_{\lambda})|}.$$
\end{proposition}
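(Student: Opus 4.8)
The plan is to read everything off from the exact count in Proposition~\ref{prop-M-bound}: for every prime power $q$ with $(M,q)=1$,
$$\mathfrak P_{\U}(n,q,M)=\sum_{\substack{\lambda\vdash n\\ \lambda=1^{m_1}2^{m_2}\cdots}}\ \prod_i\frac{1}{m_i!\,i^{m_i}\,(M,q^i-(-1)^i)^{m_i}}+\mathcal O(q^{-1}),$$
and then let $q\to\infty$ through prime powers with a fixed value of $o(q)$ — a legitimate restriction since $M$ is prime, so $(\mathbb Z/M\mathbb Z)^{\times}$ is cyclic of order $M-1$ and every divisor of $M-1$ is realized as $o(q)$ for infinitely many such $q$. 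Along such a subsequence the $\mathcal O(q^{-1})$ error disappears, and, $M$ being prime, each factor $(M,q^i-(-1)^i)$ equals either $1$ or $M$; so the whole statement comes down to deciding, for each admissible part size $i$, when $M\mid q^i-(-1)^i$.

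The key (purely number-theoretic) step is the congruence criterion. If $i$ is even, $q^i-(-1)^i=q^i-1$ and $M\mid q^i-1\iff q^i\equiv 1\imod M\iff o(q)\mid i$. If $i$ is odd, $q^i-(-1)^i=q^i+1$, and $M\mid q^i+1$ means $q^i\equiv -1\imod M$; since $M>2$ the cyclic group $(\mathbb Z/M\mathbb Z)^{\times}$ has a unique element of order $2$, namely $-1$, so this is equivalent to $o(q)\mid 2i$ together with $o(q)\nmid i$. I would then observe that for odd $i$ this pair of conditions is the same as ``$o(q)$ even and $o(q)\mid 2i$'': if $o(q)$ were odd then $o(q)\mid 2i$ would already force $o(q)\mid i$, and conversely if $o(q)$ is even then $o(q)\mid i$ is impossible because $i$ is odd. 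Comparing with the definition of $\pi'_{o(q)}(\lambda)$, the parts $n_j$ of $\lambda$ on which $M\mid q^{n_j}-(-1)^{n_j}$ are exactly those counted by $\pi'_{o(q)}(\lambda)$, whence
$$\prod_i (M,q^i-(-1)^i)^{m_i}=M^{\pi'_{o(q)}(\lambda)}.$$

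Substituting this into the displayed formula and discarding the error term gives
$$\lim_{q\to\infty}\mathfrak P_{\U}(n,q,M)=\sum_{\substack{\lambda\vdash n\\ \lambda=1^{m_1}2^{m_2}\cdots}}\frac{1}{M^{\pi'_{o(q)}(\lambda)}}\prod_i\frac{1}{m_i!\,i^{m_i}},$$
and the second equality in the statement is just the identity $|\mathcal Z_{S_n}(\sigma_\lambda)|=\prod_i m_i!\,i^{m_i}$ recorded before Proposition~\ref{prop-M-bound}. The one place that needs a little care — and hence the main, if modest, obstacle — is the odd-part bookkeeping, i.e.\ verifying that the two-clause condition in the definition of $\pi'$ is genuinely equivalent to $M\mid q^i+1$; everything else is the $\GL(n,q)$ argument verbatim with $q^i-1$ replaced by $q^i-(-1)^i$. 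As in the $\GL$ case, one should also state explicitly that this describes the subsequential limits of $\mathfrak P_{\U}(n,q,M)$, indexed by the divisors $o(q)$ of $M-1$.
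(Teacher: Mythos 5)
Your proposal is correct and follows essentially the same route as the paper: substitute into Proposition~\ref{prop-M-bound} and determine when $(M,q^i-(-1)^i)=M$, splitting into the even and odd cases of $i$. Your verification that, for odd $i$, the condition $q^i\equiv -1\imod M$ is equivalent to ``$o(q)$ even and $o(q)\mid 2i$'' (via the uniqueness of the order-two element of $(\mathbb Z/M\mathbb Z)^{\times}$ for $M>2$) is exactly the point the paper asserts without detail, and your explicit remark about restricting to subsequences with fixed $o(q)$ only makes precise what the paper leaves implicit.
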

\begin{proof}
In view of Proposition~\ref{prop-M-bound}, all we need to find out is when $(M, q^i- (-1)^i) = M $. We claim that, $(M, q^i -(-1)^i)=M$ if and only if when $i$ is even $o(q) \mid i$, and when $i$ is odd $o(q)$ is even and $o(q)\mid 2i$. For if $i$ is even, $M\mid (q^i-1)$ if and only if $o(q) \mid i$. If $i$ is odd, $M\mid (q^i+1)$ if and only if $o(q)$ is even and $o(q)\mid 2i$. This gives the formula.
\end{proof}
\begin{proposition}
When $M=2$, and $q$ odd, 
  $$\lim_{q\to \infty}\mathfrak P_{\U}(n,q,2) = \sum_{\lambda \vdash n}  \frac{1}{2^{\pi(\lambda)} |\mathcal Z_{S_n}(\sigma_{\lambda})|} = \lim_{q\to \infty}\mathfrak  P_{\GL}(n,q,2)$$ 
where $\pi(\lambda)$ denotes the number of parts of $\lambda$. 
\end{proposition}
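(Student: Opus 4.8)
The plan is to specialize the general formula from Proposition~\ref{prop-M-bound} (the $\U(n,q)$ version) to the case $M=2$ and $q$ odd, and show the exponent of $2$ attached to each partition matches the one appearing in the $\GL$ formula. Concretely, I would start from
$$\lim_{q\to\infty}\mathfrak P_{\U}(n,q,2) = \sum_{\lambda\vdash n,\ \lambda=1^{m_1}2^{m_2}\cdots}\ \prod_i \frac{1}{m_i!\, i^{m_i}\,(2,q^i-(-1)^i)^{m_i}},$$
and the key point is to evaluate $(2, q^i-(-1)^i)$ for $q$ odd. Since $q$ is odd, $q^i$ is odd, so $q^i-1$ is even and $q^i+1$ is even; hence $(2, q^i-(-1)^i) = 2$ for \emph{every} $i\geq 1$, regardless of the parity of $i$. (This is exactly the $M=2$ degeneration of the previous proposition, where the condition ``$o(q)\mid i$ for $i$ even, $o(q)$ even and $o(q)\mid 2i$ for $i$ odd'' becomes vacuously true because $o(q)=1$ when $M=2$ and $q$ is odd.) Therefore every factor $(2,q^i-(-1)^i)^{m_i}$ contributes $2^{m_i}$, and $\prod_i 2^{m_i} = 2^{\sum_i m_i} = 2^{\pi(\lambda)}$ where $\pi(\lambda)=\sum_i m_i$ is the number of parts of $\lambda$.

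Substituting this in, and recalling that $\prod_i m_i!\, i^{m_i} = |\mathcal Z_{S_n}(\sigma_\lambda)|$, the sum collapses to
$$\lim_{q\to\infty}\mathfrak P_{\U}(n,q,2) = \sum_{\lambda\vdash n} \frac{1}{2^{\pi(\lambda)}\,|\mathcal Z_{S_n}(\sigma_\lambda)|}.$$
Finally, to get the equality with $\lim_{q\to\infty}\mathfrak P_{\GL}(n,q,2)$, I would invoke the $\GL$ computation already in the paper: for $q$ odd we have $(2,q^i-1)=2$ for all $i$ (again since $q^i-1$ is even), so the $\GL$ formula of Proposition~\ref{prop-M-bound} (the $\GL$ version) likewise reduces to $\sum_{\lambda\vdash n} 2^{-\pi(\lambda)}|\mathcal Z_{S_n}(\sigma_\lambda)|^{-1}$. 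Comparing the two expressions gives the claimed identity. This also matches the already-noted Corollary for $\GL(n,q)$ with $M=2$.

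There is essentially no serious obstacle here: the statement is a direct specialization, and the only thing to be careful about is the parity bookkeeping — specifically that for $q$ odd both $q^i-1$ and $q^i+1$ are even, so the distinction between even and odd parts of $\lambda$ (which matters for general $M$ via Ennola-type twisting) disappears entirely when $M=2$. One may add a one-line remark that this is the precise point at which the Ennola duality $\mathfrak P_{\U}(n,q,M)\leftrightarrow\mathfrak P_{\GL}(n,-q,M)$ becomes an equality rather than a formal substitution, since replacing $q$ by $-q$ changes neither $q^i-1\pmod 2$ nor $q^i+1\pmod 2$.
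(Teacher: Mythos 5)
Your proof is correct and follows essentially the same route as the paper, which likewise rests on the single observation that for $q$ odd both $q^i-1$ and $q^i+1$ are even, so $(2,q^i-(-1)^i)=2$ for every $i$ and the product collapses to $2^{\pi(\lambda)}$. One small caveat: your parenthetical claim that the criterion of the preceding proposition becomes ``vacuously true'' when $o(q)=1$ is not right --- with $o(q)=1$ the clause ``$o(q)$ is even'' \emph{fails} for odd $i$, which is precisely why the paper states $M=2$ as a separate proposition --- but this aside does not affect your argument, since you compute the gcd directly.
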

\begin{proof}
This is so because when $q$ is odd, $2\mid (q^i-(-1)^i)$ for all $i$.
\end{proof}

\section{Some Examples}\label{example}\label{section-examples}
In this section we discuss some examples and compute the limits.  For the case of $\GL(n)$, the computations are done with the help from that in~\cite{KS}. Thus, it provides an independent verification of the limits obtained in Section~\ref{section-GL}. 

For the group $\GL(2,q)$, we have two maximal tori up to conjugacy. The, split maximal torus $T_1\cong C_{q-1}\times C_{q-1}$ with $|W_{T_1}|=2$, and the anisotropic torus $T_2\cong C_{q^2-1}$ with $|W_{T_2}| = 2$. Thus from Theorem~\ref{main-reductive-limit}, the probability of finding a $M^{th}$ power in $\GL(2,q)$ is $=\frac{1}{2.(M,q-1)(M,q-1)}  + \frac{1}{2.(M,q^2-1)} $.
\begin{example}
For the group $\GL(2,q)$ and $M=2$ the probability is $\frac{1}{2.(2,q-1)^2}+\frac{1}{2.(2,q^2-1)} =\frac{3}{8}$ if $q$ is odd and $1$ if $q$ is even. Now we present the data following direct computation using that in~\cite{KS}. We have the following:

\begin{center}
\begin{tabular}{ |c|c|c| } \hline
$q$ & $|\GL(2, q)^2|$ & $\displaystyle\lim_{q\to\infty}\frac{|\GL(2, q)^2|}{|\GL(2, q)|}$\\ \hline
odd & $\frac{3}{8}q^4-\frac{5}{8}q^3 +\frac{1}{8}q^2+\frac{5}{8}q-\frac{1}{2}$&  $\frac{3}{8}$ \\  \hline
even & $q^4-2q^3+2q-1$& $1$\\  \hline
\end{tabular}
\end{center}
\vskip2mm
\begin{center}
\begin{tabular}{ |c|c|c|c|} 
\hline
$q$ & $|\GL(2,q)^2_{rg}|$ & $|\GL(2, q)^2_{ss}|$ & $|\GL(2, q)^2_{rs}|$  \\   \hline
odd & $\frac{3}{8}q^4-\frac{5}{8}q^3 +\frac{1}{8}q^2-\frac{3}{8}q-\frac{1}{2}$ & $\frac{3}{8}q^4-\frac{9}{8}q^3 +\frac{5}{8}q^2+\frac{9}{8}q-1$ & $\frac{3}{8}q^4-\frac{9}{8}q^3 +\frac{5}{8}q^2+\frac{1}{8}q$\\   \hline
even & $q^4-2q^3+q$ & $q^4-2q^3+2q-1$ & $q^4-2q^3+q$\\ \hline
\end{tabular}
\end{center}
Hence we get the equalities in Theorem~\ref{main-reductive-limit} for this case.
\end{example}

\begin{example}
For the group $\GL(2,q)$ and $M=3$, the probability as per Theorem~\ref{main-reductive-limit} would be: 
$$
\displaystyle\lim_{q\to\infty}\frac{|\GL(2, q)^3|}{|\GL(2, q)|} =
\begin{cases}
1                   & \textup{\ if\ } q=0\imod 3\\ 
\frac{2}{9} & \textup{\ if\ } q=1\imod 3\\
\frac{2}{3} & \textup{\ if\ } q=2\imod 3.\\
\end{cases}
$$
We have the following:
 
\begin{center}
\begin{tabular}{ |c|c|c| } \hline
$q$ & $|\GL(2, q)^3|$ & $\displaystyle\lim_{q\to\infty}\frac{|\GL(2, q)^3|}{|\GL(2, q)|}$\\ \hline
$0$ & $q^4-2q^3+2q-1$ & $1$ \\ \hline
$1$ & $\frac{2}{9}(q^4-q^3-q^2+q)$ & $\frac{2}{9}$\\  \hline
$2$ & $\frac{2}{3}(q^4-q^3-q^2+q)$ & $\frac{2}{3}$ \\ 	\hline
\end{tabular}
\end{center}
 \vskip2mm
\begin{center}
\begin{tabular}{ |c|c|c|c|} 
\hline
$q$ & $|\GL(2,q)^3_{rg}|$ & $|\GL(2, q)^3_{ss}|$ & $|\GL(2, q)^3_{rs}|$  \\   \hline
0 & $q^4-2q^3+q$ & $q^4-2q^3+2q-1$ & $q^4-2q^3+q$\\ \hline
1 & $\frac{2}{9}q^4-\frac{2}{9}q^3-\frac{2}{9}q^2-\frac{1}{9}q+\frac{1}{3}$ & $\frac{2}{9}q^4-\frac{5}{9}q^3+\frac{1}{9}q^2+\frac{5}{9}q-\frac{1}{3}$ & $\frac{2}{9}q^4-\frac{5}{9}q^3+\frac{1}{9}q^2+\frac{2}{9}$\\ \hline
2 & $\frac{2}{3}q^4-\frac{2}{3}q^3- \frac{2}{3}q^2-\frac{1}{3}q+1$ & $\frac{2}{3}q^4-\frac{5}{3}q^3+ \frac{1}{3}q^2+\frac{5}{3}q-1$ & $\frac{2}{3}q^4-\frac{5}{3}q^3+ \frac{1}{3}q^2+\frac{2}{3}q$. \\\hline
\end{tabular}
\end{center}
\end{example}

Now, for the group $\GL(3,q)$, we have three maximal tori up to conjugacy. The, split maximal torus $T_1\cong C_{q-1}\times C_{q-1}\times C_{q-1}$ with $|W_{T_1}|=6$, the anisotropic torus $T_2\cong C_{q^3-1}$ with $|W_{T_2}| = 3$, and $T_3\cong C_{q^2-1}\times C_{q-1}$ with $|W_{T_3}|=2$. Thus, as per Theorem~\ref{main-reductive-limit}, the probability of finding a $M^{th}$ power in $\GL(3,q)$ is
$$=\frac{1}{6.(M,q-1)^3}  + \frac{1}{3.(M,q^3-1)} +\frac{1}{2.(M,q^2-1)(M,q-1)} .$$
\vskip2mm

\begin{example}
For the group $\GL(3,q)$ and $M=2$ the probability is 
$$=\frac{1}{6.(2,q-1)^3}  + \frac{1}{3.(2,q^3-1)} +\frac{1}{2.(2,q^2-1)(2,q-1)} $$
which is $\frac{5}{16}$ when $q$ is odd, and $1$ if $q$ is even. 
We have the following:

\begin{center}
\begin{tabular}{ |c|c|c| } \hline
$q$ & $|\GL(3, q)^2|$ & $\displaystyle\lim_{q\to\infty}\frac{|\GL(3, q)^2|}{|\GL(3, q)|}$\\ \hline
odd &  $\frac{1}{16}(5q^9-7q^8 -q^7 + 2q^6 + 3q^5 + q^4 - 7q^3 + 4q^2)$ &$\frac{5}{16}$ \\ \hline
even &  $q^9-2q^8+2q^6+q^5-q^4-3q^3+q^2+q$ & $1$  \\   \hline
\end{tabular}
\end{center}
\vskip2mm 
\begin{center}
\begin{tabular}{ |c|c|c|c|} 
\hline
$q$ & $|\GL(3,q)^2_{rg}|$ & $|\GL(3, q)^2_{ss}|$ & $|\GL(3, q)^2_{rs}|$  \\   \hline
odd & $\frac{5}{16}q^9-\frac{7}{16}q^8 -\frac{1}{16}q^7-\frac{3}{8}q^6+\frac{11}{16}q^5$ & $\frac{5}{16}q^9-\frac{11}{16}q^8 +\frac{3}{16}q^7$ & $\frac{5}{16}q^9-\frac{11}{16}q^8 +\frac{3}{16}q^7+\frac{3}{8}q^6$\\
&$+\frac{1}{16}q^4+\frac{9}{16}q^3-\frac{1}{4}q^2-\frac{1}{2}q$ & $+\frac{7}{8}q^6-\frac{5}{16}q^5-\frac{11}{16}q^4 $ & $+\frac{11}{16}q^5-\frac{11}{16}q^4-\frac{3}{16}q^3$\\ 
&& $-\frac{11}{16}q^3+q^2+\frac{1}{2}q-\frac{1}{2}$   &\\ \hline
even & $q^9-2q^8+q^6+2q^5$ & $q^9-2q^8+2q^6-q^4$ &  $q^9-2q^8+q^6+2q^5$\\
& $-q^4-q^3$ & $-2q^3+2q^2+q-1$ & $-q^4-q^3$. \\ \hline
\end{tabular}
\end{center}
\end{example}

\begin{example}
For the group $\GL(3,q)$ and $M=3$ the probability is
$$=\frac{1}{6.(3,q-1)^3}  + \frac{1}{3.(3,q^3-1)} +\frac{1}{2.(3,q^2-1)(3,q-1)} =
\begin{cases}
1 & \textup{\ if\ } q=0\imod 3\\ 
\frac{14}{81} &  \textup{ if\ } q=1\imod 3\\
\frac{2}{3} & \textup{\ if\ } q=2\imod 3.
\end{cases}
$$
We have the following which confirms the above:
 \begin{center}
\begin{tabular}{ |c|c|c| } \hline
$q\imod 3$ & $|\GL(3, q)^3|$ & $\displaystyle\lim_{q\to\infty}\frac{|\GL(3, q)^3|}{|\GL(3, q)|}$\\ \hline
$0$ & $q^9-2q^8+2q^6-q^4-2q^3+2q^2+q-1$ & $1$\\  \hline
$1$ & $\frac{1}{81}(14q^9-14q^8-32q^7+36q^6+14q^5-22q^4+4q^3+54q-54)$ &  $\frac{14}{81}$ \\   \hline
$2$ & $\frac{2}{3}(q^9-q^8-q^7+q^5+q^4-q^3)$ & $\frac{2}{3}$ \\  \hline
\end{tabular}
\end{center}
 \vskip2mm 
 \begin{center}
\begin{tabular}{ |c|c|c|c|} 
\hline
$q$ & $|\GL(3,q)^3_{rg}|$ & $|\GL(3, q)^3_{ss}|$ & $|\GL(3, q)^3_{rs}|$  \\   \hline
$0$ & $q^9 - 2q^8 + q^6 + 2q^5$ & $q^9-2q^8+2q^6-q^4$ & $q^9 - 2q^8 + q^6 + 2q^5$\\  &$- q^4 - q^3$ & $-2q^3+2q^2+q-1$ & $- q^4 - q^3$\\ \hline
$1$ & $\frac{14}{81}q^9-\frac{14}{81}q^8-\frac{32}{81}q^7+\frac{1}{3}q^6+\frac{23}{81}q^5$ & $\frac{14}{81}q^9-\frac{23}{81}q^8-\frac{23}{81}q^7 $ &  $\frac{14}{81}q^9-\frac{23}{81}q^8-\frac{23}{81}q^7+\frac{7}{9}q^6$\\
& $-\frac{22}{81}q^4+\frac{40}{81}q^3-\frac{1}{9}q^2-\frac{1}{3}q$ & $+\frac{8}{9}q^6-\frac{13}{81}q^5 -\frac{58}{81}q^4 $ & $+\frac{23}{81}q^5-\frac{58}{81}q^4+\frac{4}{81}q^3$\\ 
&& $-\frac{5}{81}q^3+\frac{4}{9}q^2+q-1$& \\ \hline
$2$ & $\frac{2}{3}q^9-\frac{2}{3}q^8-\frac{2}{3}q^7-q^6 +\frac{5}{3}q^5$ & $\frac{2}{3}q^9-\frac{5}{3}q^8+\frac{1}{3}q^7+2q^6- \frac{1}{3}q^5$ & $\frac{2}{3}q^9-\frac{5}{3}q^8+\frac{1}{3}q^7+q^6$\\
& $+\frac{2}{3}q^4+\frac{4}{3}q^3-q^2-q$ & $-\frac{4}{3}q^4 -\frac{5}{3}q^3+2q^2+q-1$ & $+\frac{5}{3}q^5-\frac{4}{3}q^4-\frac{2}{3}q^3$. \\ \hline
\end{tabular}
\end{center}
\end{example}

\begin{example}
For the group $\SL(2,q)$, from Proposition~\ref{SL-limit}, we have 
 $$\lim_{q\to \infty} \frac{|\SL(2,q)|^M}{|\SL(2,q)|}= \frac{1}{2(M,q-1)} + \frac{1}{2(M,q+1)}.$$
When $q$ is odd and $M$ is a prime this takes the values 
 $$
\begin{cases}
\frac{1}{2} & \textup{if\ } M=2\\
\frac{M+1}{2M} & \textup{if\ } M \textup{\ coprime to $q$, and divides order of\ } $\SL(2,q)$\\
1 & \textup{otherwise}.\\
\end{cases}
 $$
This explains the limits obtained in~\cite[Theorem 5.1]{KS}.
 \end{example}
\begin{example}
For the group $\U(3,q)$ the maximal tori are $T_1\cong C_{q+1}^3$, $T_2\cong C_{q+1}\times C_{q^2-1}$ and $T_3\cong C_{q^3+1}$.  Thus, the proportion of $M^{th}$ power is
$$=\frac{1}{6.(M, q+1)^3} + \frac{1}{2(M, q+1)(M, q^2-1)} + \frac{1}{3.(M, q^3+1)}.$$
The values for $M=2$ and $3$ are,
\begin{center}
\begin{tabular}{|c|c|c|c|c|}\hline
$q\imod 2$ & $\displaystyle\lim_{q\to\infty}\frac{|\U(3,q)^2|}{|\U(3,q)|}$ & & $q\imod 3$& $\displaystyle\lim_{q\to\infty} \frac{|\U(3,q)^3|}{|\U(3,q)|}$  \\ \hline 
$0$   & $1$  && $0$& $1$\\ \hline
$1$ &$\frac{5}{16}$&& $1$& $\frac{2}{3}$ \\ \hline
&&&$2$&   $\frac{14}{81}$. \\\hline
\end{tabular}
 \end{center}

\end{example}


\end{document}